\DeclareFontFamily{U}{mathx}{}
\DeclareFontShape{U}{mathx}{m}{n}{<-> mathx10}{}
\DeclareSymbolFont{mathx}{U}{mathx}{m}{n}
\DeclareMathAccent{\widehat}{0}{mathx}{"70}
\DeclareMathAccent{\widecheck}{0}{mathx}{"71}
\DeclareMathAccent{\widetilde}{0}{mathx}{"72}
\newtheorem{theorem}{Theorem}
\newtheorem{corollary}[theorem]{Corollary}
\newtheorem{lemma}[theorem]{Lemma}
\theoremstyle{definition}
\newtheorem{remark}[theorem]{Remark}
\newtheorem{example}{Example}
\theoremstyle{remark}
\newtheorem{assumption}[theorem]{Assumption}
\numberwithin{theorem}{section}
\numberwithin{equation}{section}
\numberwithin{table}{section}
\numberwithin{figure}{section}
\newcommand{\invdag}{\scalerel*{\rotatebox[origin=c]{180}{$\dagger$}}{\dagger}}
\begin{document}

\title[Higher-order convergence for parabolic multiscale problems]{
Optimal higher-order convergence rates for parabolic multiscale problems}
\author[B.~Kalyanaraman, F.~Krumbiegel, R.~Maier, S.~Wang]{Balaje~Kalyanaraman$^\ddagger$$^\mathsection$, Felix~Krumbiegel$^{\dagger\invdag}$, Roland~Maier$^\dagger$, and Siyang~Wang$^\ddagger$}
\address{${}^{\ddagger}$ Department of Mathematics and Mathematical Statistics, Ume\aa{} University,  901 87 Ume\aa, Sweden.}
\email{\{balaje.kalyanaraman,siyang.wang\}@umu.se}
\address{${}^{\mathsection}$ Department of Computing Science, Ume\aa{} University,  901 87 Ume\aa, Sweden.}
\address{${}^{\dagger}$ Institute for Applied and Numerical Mathematics, Karlsruhe Institute of Technology, Englerstr.~2, 76131 Karlsruhe, Germany.}
\email{roland.maier@kit.edu}
\address{${}^{\invdag}$ Department of Mathematics, Saarland University, Campus E 1.1, 66123 Saarbrücken, Germany.}
\email{felix.krumbiegel@uni-saarland.de}
\date{\today}
%
%
\begin{abstract}
  In this paper, we introduce a higher-order multiscale method for time-dependent problems with highly oscillatory coefficients. Building on the localized orthogonal decomposition (LOD) framework, we construct enriched correction operators to enrich the multiscale spaces, ensuring higher-order convergence without requiring assumptions on the coefficient beyond boundedness. This approach addresses the challenge of a reduction of convergence rates when applying higher-order LOD methods to time-dependent problems. Addressing a parabolic equation as a model problem, we prove the exponential decay of these enriched corrections and establish rigorous a priori error estimates. Numerical experiments confirm our theoretical results. 
\end{abstract}

\maketitle

{\tiny {\bf Keywords.} second-order parabolic equation, multiscale method, localized orthogonal decomposition, higher-order }\\
\indent
{\tiny {\bf AMS subject classification.} 65M12, 65M15, 65M60, 35K20}



\section{Introduction}
We consider the numerical solution of parabolic partial differential equations with highly oscillatory diffusion coefficients. Such problems arise in the context of, e.g., thermal conductivity in composite materials and heat conduction in batteries. It is well known that a standard spatial discretization requires the mesh size $h$ to be smaller than the oscillation scale $\varepsilon$ of the diffusion coefficient to get a reasonable approximation in the first place. When $\varepsilon \ll 1$ relative to the domain size, the requirement~$h < \varepsilon$ leads to prohibitively high computational cost, especially for time-dependent problems. In such cases, multiscale methods provide a more efficient and suitable alternative. 

Over the years, many multiscale methods have been developed to address the challenge of rough coefficients. 
In the context of parabolic problems, we exemplarily refer to~\cite{Minz07,OwhZ07,AbdV12,Tan2019,Owhadi2017,CHUNG2018419, SchSme,Eckhardt2023, LjuMaiMal,HuLC25}. 
In this work, we consider the localized orthogonal decomposition (LOD) method, which builds upon ideas of the variational multiscale method \cite{Hughes1998}. The LOD method is based on a decomposition of fine-scale finite element solution space into a low dimensional multiscale space and a high-dimensional remainder space. The multiscale space is spanned by basis functions containing information on the coefficient and have local support in small subdomains on a coarse mesh. The LOD method was originally proposed for elliptic problems~\cite{MalP14} based on first-order continuous finite elements, and then successfully applied to solve time-dependent problems, e.g., parabolic-type problems \cite{Malqvist2018,MalP17,AltCMPP20} and hyperbolic problems~\cite{Abdulle2017,PetS17,MaiP19,GeeM23}, see also the review paper~\cite{Altmann2021} and the textbook~\cite{MalP20}. 

Higher-order LOD methods are proposed in \cite{Mai21} for elliptic problems and improvements and generalizations regarding localization strategies are presented in~\cite{DonHM23,HauLM25}. These methods may use a nonconforming space consisting of Legendre polynomials of partial degree~$\leq p$ on a coarse mesh to construct a conforming multiscale space. More precisely, the basis functions of the multiscale space are solutions to a constrained minimization problem, where the constraints enforce that the $L^2$-projection of each basis function into the nonconforming space is a unique Legendre polynomial of degree~$\leq p$ on one element. The global multiscale basis functions exhibit exponential decay, allowing to localize the computation to small subdomains. Rigorous a priori error estimates are derived, showing optimal $p+2$ convergence rate in the $H^1$-norm for problems with general~$L^{\infty}$-coefficients and sufficiently regular right-hand side functions. 

In \cite{KruM25}, the higher-order LOD method was proposed for solving the wave equation with highly oscillatory wave speed in space. 
{From classical theory it is well-known }that higher-order convergence rates require higher regularity in space of {the }solution and its time derivatives. However, not much more than $H^1$-regularity in space can be expected with general $L^{\infty}$-coefficients. Consequently, arbitrarily high orders of convergence cannot be obtained. We note that the same argument is valid for parabolic PDEs. 

{In this paper, we consider a parabolic model problem, where the diffusion coefficient is time-independent but highly varying in space and develop a novel enriched higher-order LOD method. The main contribution of the proposed method is to address the order-reduction phenomenon observed in \cite{KruM25} for time-dependent problems. This is achieved by introducing appropriate enrichments of the higher-order multiscale space from the elliptic setting in~\cite{DonHM23}. Through this enrichment, we recover and prove the optimal higher-order spatial convergence rates in the very general setting of coefficients that are merely bounded. Our approach does not require additional spatial regularity of the solution or its time derivatives, but utilizes higher temporal regularity of the solution commonly satisfied for sufficiently regular initial data. In this way, we circumvent the regularity limitations encountered in earlier works. The construction of the enrichments itself is nontrivial and very technical, because they need to capture the additional fine-scale information while remaining efficient to compute. Importantly, we prove an exponential decay property of the optimal enrichments, ensuring that they can be localized. In particular, it is possible to compute them on the same localized patches as the standard higher-order LOD basis functions. While we focus on a parabolic model problem in this paper, the proposed enrichments can generally be transferred to other linear time-dependent PDEs such as, e.g., the wave equation, although the corresponding error analysis needs to be suitably adjusted. }

The rest of the paper is organized as follows. In~\cref{sec:num_hom}, we present the model problem and review the higher-order LOD method. After that, we introduce the enriched correction operators proposed in this paper, and state the localization strategy and semi-discrete a priori error estimate with optimal convergence rate. In~\cref{sec:proofs}, we give the complete proofs of the main theorems. A fully discrete method is given in~\cref{sec:numerical_examples} and we present some numerical examples to verify the theoretical analysis. We draw conclusion in~\cref{sec:conclusion}. 

\subsection*{Notation} We write~$a\lesssim b$ if~$a\leq Cb$, where the constant~$C$ is independent of the mesh {sizes~$H$ and $h$}, the localization parameters~$\ell$ and~$\lambda$, and the fine-scale parameter~$\varepsilon$. The constant may depend on the domain~$\Omega$, the regularity of the mesh~$\mathcal{T}_H$, the polynomial degree~$p$, the dimension~$d$, and the final time~$T$. We denote~$a\sim b$ if~$a\lesssim b \lesssim a$.


\section{Numerical Homogenization}\label{sec:num_hom}
In this section, we present the higher-order extension to the localized orthogonal decomposition ($p$-LOD) method, and also introduce a new technique which overcomes the order reduction shown in \cite{KruM25}. We state a remarkable localization result and the convergence rate for the error in the semi-discrete setting at the end of the section and prove them in~\cref{sec:proofs}. 

\subsection{Model problem}
We consider the parabolic partial differential equation (PDE)
\begin{equation}\label{eq:heat_model_problem}
  \begin{aligned}
    \diff t u - \divergence (A \nabla u) &= f \qquad &&\textrm{in~} \Omega\times [0, T),\\
    u &= 0 \qquad &&\textrm{on~} \Gamma\times [0, T),\\
    u &= u_0 \qquad &&\textrm{in~} \Omega\times\{0\},
  \end{aligned}
\end{equation}
where $\Omega\subset\R^d$ is an open and bounded Lipschitz domain, $\Gamma=\partial \Omega$ denotes the (Dirichlet) boundary of $\Omega$, and $T>0$. {Further, for simplicity we assume that~$\Omega$ may be decomposed into $d$-rectangles. }We have in mind coefficients $A$ that are highly oscillatory on a fine scale $0<\varepsilon\ll 1$, and be as general as $A\in \Space{L}{\Omega}[\infty]$, with positive lower and upper bounds $0<\alpha\leq A(x)\leq\beta<\infty$ for {almost every}~$x\in\Omega$. In our current setting, $A$ is assumed to be constant in time. In general, matrix-valued coefficients may also be considered, but for simplicity we restrict ourselves to scalar coefficients. 
For the discretization we introduce the weak formulation of the parabolic PDE~\eqref{eq:heat_model_problem}, which seeks a function $u$ that solves 
\begin{equation}\label{eq:heat_var_form}
    \langle \diff t u,v\rangle_{H^{-1}(\Omega)\times\Hloc} + a\IP{u}{v}[] = \langle f,v\rangle_{H^{-1}(\Omega)\times\Hloc},
\end{equation}
for all $v\in \Hloc$ {and almost every~$t\in[0,T)$}, where $a\IP{u}{v}[] = \IP{A\nabla u}{\nabla v}$ {denotes the $a$-induced inner product}, and~$\langle u,v \rangle_{H^{-1}(\Omega)\times\Hloc}$ {denotes} the dual brackets, with initial condition $u(\cdot,0)=u_0$. The following assumptions assure that a solution to \eqref{eq:heat_var_form} exists, with certain regularity in space and time. First, let~{$\{\mathcal{T}_H\}_{H>0}$} be a family of regular decompositions of $\Omega$ into quasi-uniform $d$-rectangles for a coarse mesh size $H>0$, cf.~\cite{Cia78}. {Note that more general meshes composed of, e.g., simplices or $d$-parallelograms could be considered as well.} Then, we define the space
\begin{equation*}
    \Space{H}{\mathcal{T}_H}[k] = \{v\in\LL\mid v\vert_K \in \Space{H}{K}[k]~\mathrm{for~all}~K\in\mathcal{T}_H\}, \quad {k \geq 1}, \quad \Space{H}{\mathcal{T}_H}[0] \coloneqq L^2(\Omega). 
\end{equation*}
 
\begin{assumption}[Regularity]\label{ass:regularity}
    Let $k,m\in\N_0$. Consider the following well-preparedness and compatibility assumptions: 
    \begin{enumerate}[label={(A\arabic*)}]\setcounter{enumi}{-1}
        \item Let~$f\in\Space{C}{{[0,T];\Space{H}{\mathcal{T}_H}[k]}}[m]$,
        \item let $u(\cdot,0) = u_0\in\Hloc$, 
        \item \label{ass:compatibility} let $\diff t[\nu ]u(\cdot,0) = \diff t[\nu -1] f(\cdot,0) - \divergence (A \nabla \diff t[\nu -1]u(\cdot,0))\in\Hloc$, for $\nu =1, \dots, m$,
        \item there exists a constant $C_{\mathrm{init}}$, which is independent of $\varepsilon$, such that
        \begin{equation*}
            \sum_{\nu =0}^{m}\Norm{\diff t[\nu ] u(\cdot,0)}[{\Space{H}{\Omega}[1]}] \lesssim C_{\mathrm{init}}.
        \end{equation*}
    \end{enumerate}
\end{assumption}
Assumption \ref{ass:regularity} ensures that a unique weak solution $u$ to the weak formulation~\eqref{eq:heat_var_form} exists, and has the regularity 
\begin{equation*}
    u\in\Space{C}{{[0,T];\Hloc}}[m]\cap\Space{C}{{[0,T];\LL}}[m+1].
\end{equation*}
Furthermore, there exists a constant $C_{\mathrm{data}}$ independent of $\varepsilon$ such that 
\begin{multline}\label{eq:Cdata}
    \sum_{\nu=0}^{m}\Norm{u}[{\Space{C}{{[0,T];\Space{H}{\Omega}[1]}}[\nu]}] + \Norm{u}[{\Space{C}{{[0,T];\LL}}[m+1]}]\\
    \lesssim \sum_{\nu=0}^{m} \Norm{f}[{\Space{C}{{[0,T];\Space{H}{\mathcal{T}_H}[k]}}[\nu]}] + \sum_{\nu=0}^{m}\Norm{\diff t[\nu] u(\cdot,0)}[{\Space{H}{\Omega}[1]}]\lesssim C_{\mathrm{data}}. 
\end{multline}
For a proof, we refer to {\cite[Thm.~7.1.5]{Eva10}}. 

\begin{remark}[Regularity]
    We note that~$k=0$ { and~$m=0$} is sufficient for the existence of a solution{. I}ncreasing the spatial regularity of $f$ (and also of $u_0$) does in general not increase the spatial regularity of $u$. This  follows from the assumption $A\in L^\infty(\Omega)$. {However, the lowest regularity~$k=0$ is not interesting for this work as it would only allow for lowest-order convergence. }%
    {That is, t}he parameter $k$ is still crucial to show the higher-order convergence of our method. In the following we make further assumptions to simplify the presentation. We assume~$k\geq 1$ and~$p=k-1$, such that the spatial regularity does not restrict the final convergence rate~$r=p+2=k+1$ in space. %
    {Regarding the temporal regularity, $m=0$ is the minimum that is required for existence and uniqueness of a solution. However, classical time stepping methods, such as the backward differentiation formulas, require higher temporal regularity for higher-order convergence in time. Additionally, for the proposed eho-LOD method presented later, we will trade missing spatial regularity for temporal regularity, and we state the precise requirement on the temporal regularity~$m$ in \Cref{thm:semi_discrete}.} 
    
    The well-preparedness and compatibility assumptions can be met, e.g., for $\nu=0,\dots,m-1$ if~$\diff t[\nu]u(\cdot,0)\equiv 0$, and $\diff t[\nu] f(\cdot,0)\equiv 0$. A possible interpretation of compatibility assumptions~\ref{ass:compatibility} could be heat conducting through an oscillatory medium~$A$ in space, where the solution naturally has to fulfill the PDE at the initial time. An interpretation for the case where all initial states are zero suggests that a zero initial state is externally excited by a source~$f$. %
    {Note that if the well-preparedness assumptions are not met, reduced convergence may also be observed in practice.}
\end{remark}

\subsection{Problem-adapted corrections}\label{subsec:corrections}
In this section, we introduce the basics of the stabilized $p$-LOD method following~\cite{DonHM23}, see also~\cite{Mai21}. The general idea lies in adapting coarse scale basis functions in such a way that when applied to the parabolic model problem~\eqref{eq:heat_var_form} with the specific coefficient $A$ they have much better approximation properties than classical finite element basis functions. 
In this section, we will show that these bases are not suitable for parabolic problems and we provide a construction of enriched corrections that will converge in the optimal rate. We will supply a localization strategy for the enriched corrections and give a convergence result that is independent of the spatial regularity of the solution. 

Let $p\in\N_0$ and $V_H$ denote the space of piecewise polynomials (with respect to the mesh) up to partial degree $p$, more precisely
\begin{equation*}
  V_H\coloneqq \{v\in\Space{L}{\Omega}[2] \mid v\vert_K \textrm{~is a polynomial of partial deg.}\leq p\textrm{~for~an~element~}K\in \cal T_H\}.
\end{equation*}
Further, let~$\LLProjection$ be the $\LL$-projection onto $V_H$. We have for~$\kappa\in\N_0$ with $\kappa\leq p+1$ 
\begin{equation}\label{eq:Pi_estimate}
    \Norm{(1-\LLProjection)v} \lesssim H^{\kappa}\Norm{v}[{\Space{H}{\mathcal{T}_H}[\kappa]}],
\end{equation}
for any $v\in\Space{H}{\mathcal{T}_H}[\kappa]$, see,~e.g.,~\cite{HouSS02}. 
For the remainder of this work, we denote the restriction of any space~$V$ to functions with support in a subdomain~$S\subset \Omega$ by~$V(S)$.

First, we construct the stabilized~$p$-LOD basis functions following~\cite{DonHM23}. Let~$\mathfrak{L}\coloneqq \bigcup_{K\in\mathcal{T}_H}\{\Lambda_{K,i}\}_{i=1}^{(p+1)^d}$ a basis of $V_H${, where on each element~$K\in\mathcal{T}_H$, the function~$\Lambda_{K,i}$ is a tensor-product Legendre polynomial mapped from the reference cube~$[-1, 1]^d$ to~$K$ and extended by zero outside~$K$.} %
By~\cite[Cor.~3.6]{Mai21}, there exists to each Legendre polynomial~$\Lambda_{K, i}\in\LegendreBasis$ a so-called \emph{bubble function}~$b_{K, i}\in\Space{H}{K}[1][0]$ that shares the same (local)~$\Space{L}{K}[2]$-projection, i.e.,
\begin{equation*}
  \IP{b_{K, i}}{v_H}[{\Space{L}{K}[2]}]=\IP{\Lambda_{K, i}}{v_H}[{\Space{L}{K}[2]}],
\end{equation*}
for all $v_H\in V_H(K)$. We note that these bubble functions can be computed (see~\cite[Rem.~7.1]{DonHM23}). 
Analogously to~\cite{DonHM23}, we can define a linear operator $\bubbleOperator\colon V_H\to B_H$ {with the bubble space} $B_H=\spann \bigcup_{K\in\cal T_H}\{b_{K, i}\}_{i=1}^{(p+1)^d}$, that maps each basis function $\Lambda_{K, i}\in\LegendreBasis$ to its corresponding bubble function $b_{K, i}$. 
Note, that this operator can be extended to $\LL$ by concatenation $\bubbleOperator v\coloneqq \bubbleOperator\LLProjection v$ for all~$v\in\LL$. 
From~\cite[eq.~(3.5)]{DonHM23} we have for any~$v\in L^2(K)$ and~$K\in\mathcal{T}_H$
\begin{equation}\label{eq:stability_H1_bubble_op}
    \Norm{\nabla\bubbleOperator v}[L^2(K)] \lesssim H^{-1}\Norm{v}[L^2(K)].
\end{equation}
{While the operator~$\bubbleOperator$ introduces a favorable conformity, it still leads to an unsatisfactory behavior of the corresponding multiscale construction; see \Cref{rem:bubbles} below and~\cite{Mai21} for further details.} %
{Thus,} a better suited operator~$\xInterpolation$ is required. The definition of this operator is based on~\cite{Altmann2021, HauP22} and more involved. {We define a quasi-interpolation operator~$\quasiInterpolation$ that maps onto continuous piecewise (multi-)linear polynomials with zero boundary. That is, for any~$v\in\Hloc$ and an interior node~$z\in\mathcal{T}_H$ we have %
\begin{equation*}
    (\quasiInterpolation v ) (z) = \sum_{K \in \mathcal{T}_H}\frac{|K|}{|\omega_z|}  \int\limits_{K} v\d x,
\end{equation*}
where~$\omega_z = \{K\mid \overline{K}\cap z\neq\emptyset\}$. %
The quasi-interpolation can be interpreted as a concatenation of the $\LL$-projection onto the space of piecewise constants and an averaging operator that maps onto continuous piecewise affine functions with zero trace at the boundary. %
For further details we refer to \cite{DonHM23,KruM25}. }%
Finally, we can define an~$\Hloc$-conforming subspace that is adapted to the problem at hand. Define the \emph{stabilized bubble space} $U_H=\xInterpolation V_H$, where the operator~$\xInterpolation$ (\cite[eq.~(3.6)]{DonHM23}) is given for any~$v\in\LL$ by 
\begin{equation*}
    \xInterpolation v= \quasiInterpolation v + \bubbleOperator (v-\quasiInterpolation v).
\end{equation*}
{Let now $\ell\in\N$, and define the \emph{patch of order $\ell$} around a subdomain $S\subset\Omega$ by 
\begin{equation*}
    \mathtt{N}^\ell(S)=\mathtt{N}^1(\mathtt{N}^{\ell-1}(S)),\quad\ell\geq 2,\qquad \mathtt{N}^1(S)= \mathtt{N}(S)=\bigcup\{\Bar{K}\in\mathcal{T}_H\mid \Bar{S}\cap\Bar{K}\neq\emptyset\}. 
\end{equation*}
Then, we have stability for the operator~$\xInterpolation$ in the following sense. }
\begin{lemma}[{\cite[Lem.~3.2]{DonHM23}}]
    The operator $\xInterpolation$ is a projection onto $U_H$ with the same kernel as the $L^2$-projection $\LLProjection$. Further, we have for any~$v\in\Hloc$ and~$K\in\mathcal{T}_H$ 
    \begin{equation*}
      \Norm{\nabla\xInterpolation v}[{\Space{L}{K}[2]}] + H^{-1}\Norm{(1-\xInterpolation) v}[{\Space{L}{K}[2]}]\lesssim \Norm{\nabla v}[{\Space{L}{\mathtt{N}(K)}[2]}].
    \end{equation*}
\end{lemma}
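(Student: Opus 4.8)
The plan is to reduce all three assertions to a few elementary identities between the operators~$\pi_H$, $\LLProjection$, and~$\bubbleOperator$, and then to combine the~$H^1$-stability~\eqref{eq:stability_H1_bubble_op} of~$\bubbleOperator$ with the standard stability and approximation properties of the quasi-interpolation~$\quasiInterpolation=\Averaging\circ\pi_H$. First I would record the key identities. Since the piecewise constants form a subspace of~$V_H$ and~$\pi_H$ is the~$\LL$-projection onto them, we have~$\pi_H=\pi_H\LLProjection$, whence~$\quasiInterpolation=\quasiInterpolation\LLProjection$; by the definition of its extension to~$\LL$, also~$\bubbleOperator=\bubbleOperator\LLProjection$, and therefore~$\xInterpolation=\xInterpolation\LLProjection$. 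Conversely, each bubble~$b_{K,i}$ is supported in~$K$ and shares the local~$\Space{L}{K}[2]$-projection of~$\Lambda_{K,i}$, so its global projection satisfies~$\LLProjection b_{K,i}=\Lambda_{K,i}$; hence~$\LLProjection\bubbleOperator=\LLProjection$, and applying~$\LLProjection$ to~$\xInterpolation v=\quasiInterpolation v+\bubbleOperator(v-\quasiInterpolation v)$ gives~$\LLProjection\xInterpolation=\LLProjection$.

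From these two identities the structural claims follow immediately. Idempotency is~$\xInterpolation^2=\xInterpolation\LLProjection\xInterpolation=\xInterpolation\LLProjection=\xInterpolation$, so~$\xInterpolation$ is a projection, and its range is~$\xInterpolation\LL=\xInterpolation\LLProjection\LL=\xInterpolation V_H=U_H$. For the kernel, $\xInterpolation=\xInterpolation\LLProjection$ gives~$\ker\LLProjection\subseteq\ker\xInterpolation$, while~$\LLProjection=\LLProjection\xInterpolation$ gives the reverse inclusion, so the two kernels coincide.

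For the estimate I would write~$\xInterpolation v=\quasiInterpolation v+\bubbleOperator w$ and~$(1-\xInterpolation)v=(1-\bubbleOperator)w$ with~$w\coloneqq v-\quasiInterpolation v$. The bubble gradient is controlled by~\eqref{eq:stability_H1_bubble_op}, namely~$\Norm{\nabla\bubbleOperator w}[{\Space{L}{K}[2]}]\lesssim H^{-1}\Norm{w}[{\Space{L}{K}[2]}]$; since~$\bubbleOperator w\in\Space{H}{K}[1][0]$ elementwise, a Friedrichs inequality on~$K$ yields~$\Norm{\bubbleOperator w}[{\Space{L}{K}[2]}]\lesssim H\Norm{\nabla\bubbleOperator w}[{\Space{L}{K}[2]}]\lesssim\Norm{w}[{\Space{L}{K}[2]}]$. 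Inserting the local~$H^1$-stability~$\Norm{\nabla\quasiInterpolation v}[{\Space{L}{K}[2]}]\lesssim\Norm{\nabla v}[{\Space{L}{\mathtt{N}(K)}[2]}]$ and the first-order approximation~$\Norm{w}[{\Space{L}{K}[2]}]=\Norm{(1-\quasiInterpolation)v}[{\Space{L}{K}[2]}]\lesssim H\Norm{\nabla v}[{\Space{L}{\mathtt{N}(K)}[2]}]$ of the quasi-interpolation (cf.~\cite{Altmann2021,HauP22}) then bounds both~$\Norm{\nabla\xInterpolation v}[{\Space{L}{K}[2]}]$ and~$H^{-1}\Norm{(1-\xInterpolation)v}[{\Space{L}{K}[2]}]$ by~$\Norm{\nabla v}[{\Space{L}{\mathtt{N}(K)}[2]}]$, which is the claim.

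The main obstacle is the pair of quasi-interpolation bounds for~$\quasiInterpolation=\Averaging\circ\pi_H$ on the enlarged patch~$\mathtt{N}(K)$: because~$\pi_H$ maps into discontinuous piecewise constants, the averaging step~$\Averaging$ must be estimated through the inter-element jumps, and it is exactly this step that forces the one-layer patch~$\mathtt{N}(K)$ (rather than~$K$ alone) to appear on the right-hand side. Everything else is pure bookkeeping with the operator identities together with the already available bubble stability~\eqref{eq:stability_H1_bubble_op}.
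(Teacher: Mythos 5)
Your proof is correct. Note that the paper itself gives no proof of this lemma --- it is imported verbatim from \cite[Lem.~3.2]{DonHM23} --- and your reconstruction follows essentially the route of that reference: the operator identities $\xInterpolation=\xInterpolation\LLProjection$ and $\LLProjection\xInterpolation=\LLProjection$ (the latter via $\LLProjection b_{K,i}=\Lambda_{K,i}$) yield the projection, range, and kernel statements by pure algebra, and the local estimate follows from the bubble stability~\eqref{eq:stability_H1_bubble_op} plus a Friedrichs inequality on $K$, combined with the $H^1$-stability and first-order $L^2$-approximation of $\quasiInterpolation$ on the patch $\mathtt{N}(K)$, which you correctly isolate as the only external ingredient (standard for averaged quasi-interpolation operators, cf.~\cite{Altmann2021,HauP22}).
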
%
{\begin{remark}[Bubble functions]\label{rem:bubbles}
    As the construction is quite involved, we give a short explanation on how the spaces are set up. In~\cite{Mai21}, the higher-order LOD method was introduced, where the construction of the LOD basis functions follows a constraint minimization problem. Implicitly, the construction uses higher-order polynomials and finds suitable bubble functions with equal~$L^2$-projections and $H^1_0$-conformity. This is achieved with the operator~$\bubbleOperator$. %
    The issue with this construction is that $\bubbleOperator$ is not an appropriate coarse operator to provide a starting point for an suitable multiscale construction. In particular, replacing piecewise constant functions by local $H^1_0$-functions turns out to be problematic. The extended bubble operator~$\xInterpolation$ works around this issue by allowing a slightly enlarged support to enforce conformity. %
    We refer to~\cite{Altmann2021,KruM25,DonHM23} for visualizations of the extended bubble functions. %
\end{remark}}

{As a next step, we want to `correct' the functions in the extended bubble space by suitable function in }
$W=\ker \LLProjection = \ker \xInterpolation$. For any~$K\in\mathcal{T}_H$, and~$\ell\in\N$, let the \emph{localized element-wise correction operator}~$\Corrector_K^{[\ell]}\colon \Hloc\to W$ be defined for any~$v\in\Hloc$ by
\begin{equation}\label{eq:CK_definition}
    \rIP{\Corrector_K^{[\ell]} v}{w}{\NbK}=\rIP{v}{w}{K},
\end{equation}
for all $w\in W(\NbK)$, where $\rIP{u}{v}{K} = \IP{A\nabla u}{\nabla v}[L^2(K)]$. The \emph{localized correction operator} is given by $\Corrector^{[\ell]} = \sum_{K\in\cal T_H}\Corrector_K^{[\ell]}$. The \emph{localized multiscale space} is defined as $\tV^{[\ell]}=(1-\Corrector^{[\ell]})\,U_H$. 
\begin{remark}[global corrections]\label{rem:global_corrections}
    Choosing $\ell\geq\tfrac{\diam(\Omega)}{H}$, we have~$\NbK=\Omega$ for all~$K\in\mathcal{T}_H$. In this setting we formally set $\ell=\infty$ and define the \emph{correction operator}~$\Corrector \coloneqq \Corrector^{[\infty]}$, the \emph{element-wise correction operators}~$\Corrector_K \coloneqq \Corrector_K^{[\infty]}$, and the \emph{multiscale space}~$\widetilde{V}_H= \widetilde{V}_H^{[\ell]}$. 
\end{remark}
{\begin{remark}[Correcting bubble functions]
    The procedure of constructing the multiscale space is as follows. We start with a coarse polynomial space, and seek bubble functions that are~$\Hloc$-conforming and whose $L^2$-projection onto the polynomial space equal the basis functions of the polynomial space. The correction then adjusts the bubble functions such that they minimize the elliptic energy while preserving the~$L^2$-projection onto polynomials. %
    For elliptic problems, this construction leads to favorable orthogonality properties of the Galerkin error, which can be used to extract optimal orders of convergence from the right-hand side. 
\end{remark}}

For the (element-wise) correction operator the following results hold. 
\begin{lemma}[{\cite[Lem.~5.1]{DonHM23}}]\label{lem:CK_decay}
    Let~$K\in\mathcal{T}_H$, and~$\ell\in\N$. Then 
    for any $v\in\Hloc$ 
    \begin{equation*}
        \Norm{\nabla \Corrector_K v}[L^2(\Omega\setminus\NbK)] \lesssim \exp^{-C\ell}\Norm{\nabla\Corrector_Kv}. 
    \end{equation*}
\end{lemma}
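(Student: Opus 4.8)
The plan is to run the classical cutoff-and-iterate exponential-decay argument for LOD correctors, taking care of the stabilized interpolation $\xInterpolation$. Abbreviate $\phi := \Corrector_K v \in W$. Testing the global version of~\eqref{eq:CK_definition} (i.e.\ $\ell=\infty$) shows $a\IP{\phi}{w}[] = \rIP{v}{w}{K}$ for all $w\in W$; in particular $a\IP{\phi}{w}[]=0$ for every $w\in W$ whose support avoids $K$. Thus $\phi$ is discretely $A$-orthogonal to $W$-functions living away from $K$, and the goal is to turn this orthogonality into geometric decay of the tail energies $E_m := \Norm{\nabla\phi}[L^2(\Omega\setminus\mathtt{N}^m(K))]^2$ as $m$ grows.

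For a fixed layer index $2\le j\le\ell$ I would pick a Lipschitz cutoff $\eta$ with $0\le\eta\le1$, $\eta\equiv0$ on $\mathtt{N}^{j-1}(K)$, $\eta\equiv1$ on $\Omega\setminus\mathtt{N}^{j}(K)$, and $\Norm{\nabla\eta}[L^\infty]\lesssim H^{-1}$, so that $\nabla\eta$ is supported in the single-layer annulus $R_j := \mathtt{N}^{j}(K)\setminus\mathtt{N}^{j-1}(K)$. Coercivity gives $\alpha E_j \le \int\eta\,A\nabla\phi\cdot\nabla\phi$, and expanding the product yields
\[
  \int\eta\,A\nabla\phi\cdot\nabla\phi = a\IP{\phi}{\eta\phi}[] - \int \phi\,A\nabla\phi\cdot\nabla\eta .
\]
Because $\eta\equiv0$ near $K$ and $\xInterpolation$ enlarges supports by at most one coarse layer, $w:=(1-\xInterpolation)(\eta\phi)\in W$ has support disjoint from $K$, so $a\IP{\phi}{w}[]=0$ and hence $a\IP{\phi}{\eta\phi}[]=a\IP{\phi}{\xInterpolation(\eta\phi)}[]$. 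The crucial observation is that, since $\xInterpolation\phi=0$ and $\xInterpolation$ is local (through the one-layer averaging $\Averaging$ and the element-local $\bubbleOperator$), the function $\xInterpolation(\eta\phi)$ vanishes on every element whose one-layer patch lies entirely in $\{\eta\equiv0\}$ or $\{\eta\equiv1\}$; thus $\xInterpolation(\eta\phi)$ is confined to a bounded neighborhood of $R_j$.

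With this localization both right-hand terms are controlled by the energy of $\phi$ in a fixed finite enlargement of $R_j$. For the cutoff term I would use $\phi\in W=\ker\LLProjection$, which forces $\int_K\phi=0$ elementwise and hence the Poincaré bound $\Norm{\phi}[L^2(R_j)]\lesssim H\Norm{\nabla\phi}[L^2(R_j)]$, absorbing the factor $H^{-1}$ from $\nabla\eta$. For $a\IP{\phi}{\xInterpolation(\eta\phi)}[]$ I would apply Cauchy--Schwarz on the neighborhood of $R_j$ together with the stability estimate of the preceding lemma (summed elementwise) and the same Poincaré inequality. Collecting the bounds produces a Caccioppoli-type inequality of the form $\alpha E_j \lesssim \Norm{\nabla\phi}[L^2(\mathtt{N}^{j+1}(K)\setminus\mathtt{N}^{j-2}(K))]^2 = E_{j-2}-E_{j+1}$, which rearranges (using that $E_m$ is nonincreasing) to the contraction $E_{j+1}\le\theta\,E_{j-2}$ with a fixed ratio $\theta<1$. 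Iterating this recursion over the $\sim\ell$ available layers yields $E_\ell\lesssim\theta^{\ell/3}E_0\lesssim\exp^{-C\ell}\Norm{\nabla\phi}[L^2(\Omega)]^2$, and taking square roots gives the claim.

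The main obstacle is the support bookkeeping in the second step: one must verify precisely that $(1-\xInterpolation)(\eta\phi)$ lands in $W$ with support away from $K$, and that $\xInterpolation(\eta\phi)$ is genuinely confined to a bounded neighborhood of the annulus. Both rest on the one-layer locality of $\Averaging$ and the element-locality of $\bubbleOperator$ underlying $\xInterpolation$, and the fixed layer-shift in the recursion must be tracked so that all overlap constants stay independent of $j$, $\ell$, $H$, and $\varepsilon$. Once the localization and the Poincaré/stability estimates are in place, the geometric iteration is routine.
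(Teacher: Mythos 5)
Your proposal is correct: the cutoff-and-iterate argument (orthogonality of $\Corrector_K v$ against $W$-functions supported away from $K$, the one-layer locality and $H^1$-stability of $\xInterpolation$ together with $\xInterpolation\Corrector_K v=0$ to confine $\xInterpolation(\eta\,\Corrector_K v)$ near the annulus, the elementwise Poincar\'e inequality from $\ker\LLProjection$, and the resulting geometric contraction of the tail energies) is precisely the standard LOD decay proof. Note that the paper does not prove this lemma itself but quotes it from \cite[Lem.~5.1]{DonHM23}, and your argument is essentially the one given there, up to the layer-width bookkeeping you already flag as only affecting constants.
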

\begin{lemma}[{\cite[Lem.~A.1]{DonHM23}}]\label{lem:CK_decay_error}
    Let~$K\in\mathcal{T}_H$, and~$\ell\in\N$. Then 
    for any $v\in\Hloc$ 
    \begin{equation*}
        \Norm{\nabla (\Corrector_K^{[\ell]} - \Corrector_K) v} \lesssim \exp^{-C\ell}\Norm{\nabla\Corrector_Kv}. 
    \end{equation*}
\end{lemma}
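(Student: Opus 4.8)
The plan is to exploit that the localized corrector $\Corrector_K^{[\ell]}v$ is nothing but an energy‑orthogonal projection of the global corrector onto $W(\NbK)$, and then to quantify the resulting best‑approximation error by a cut‑off argument driven by the exponential decay in \Cref{lem:CK_decay}. Write $\phi\coloneqq\Corrector_K v$ for brevity. First I would establish a Galerkin orthogonality: since $\Corrector_K^{[\ell]}v\in W(\NbK)$ has its gradient supported in $\NbK$, its defining relation \eqref{eq:CK_definition} may be read as $\rIP{\Corrector_K^{[\ell]}v}{w}{\Omega}=\rIP{v}{w}{K}$ for all $w\in W(\NbK)$, which coincides with the global relation $\rIP{\phi}{w}{\Omega}=\rIP{v}{w}{K}$ on the same test space $W(\NbK)\subseteq W$. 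Subtracting gives $\rIP{\Corrector_K^{[\ell]}v-\phi}{w}{\Omega}=0$ for all $w\in W(\NbK)$, so $\Corrector_K^{[\ell]}v$ is the energy projection of $\phi$ onto $W(\NbK)$. Using coercivity and boundedness of $A$ (with constants $\alpha,\beta$), this reduces the claim to the best‑approximation estimate
\[
  \Norm{\nabla(\Corrector_K^{[\ell]}-\Corrector_K)v}\lesssim\inf_{w\in W(\NbK)}\Norm{\nabla(\phi-w)}.
\]

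Next I would construct a suitable competitor $w$. Choose a Lipschitz cut‑off function $\eta$ with $\eta\equiv 1$ on $\mathtt{N}^{\ell-2}(K)$, $\eta\equiv 0$ outside $\mathtt{N}^{\ell-1}(K)$, $0\le\eta\le 1$, and $\Norm{\nabla\eta}[L^\infty]\lesssim H^{-1}$, and set $w\coloneqq(1-\xInterpolation)(\eta\phi)$. Since $\xInterpolation$ enlarges supports by at most one coarse layer, $w$ is supported in $\mathtt{N}(\mathtt{N}^{\ell-1}(K))=\NbK$, and $w\in W$ because $1-\xInterpolation$ maps into $\ker\xInterpolation=W$; hence $w\in W(\NbK)$ is admissible. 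As $\phi\in W$ gives $\xInterpolation\phi=0$, i.e.\ $\phi=(1-\xInterpolation)\phi$, linearity yields the key identity $\phi-w=(1-\xInterpolation)\bigl((1-\eta)\phi\bigr)$. Applying the element‑wise stability estimate for $\xInterpolation$ stated above and summing over elements (finite overlap) then gives $\Norm{\nabla(\phi-w)}\lesssim\Norm{\nabla((1-\eta)\phi)}$, with the relevant region contained in $\Omega\setminus\mathtt{N}^{\ell-3}(K)$.

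Finally I would estimate $\Norm{\nabla((1-\eta)\phi)}$ via the product rule $\nabla((1-\eta)\phi)=(1-\eta)\nabla\phi-\phi\,\nabla\eta$. The first term is supported in $\Omega\setminus\mathtt{N}^{\ell-2}(K)$, so $\Norm{(1-\eta)\nabla\phi}\le\Norm{\nabla\phi}[L^2(\Omega\setminus\mathtt{N}^{\ell-2}(K))]\lesssim\exp^{-C\ell}\Norm{\nabla\phi}$ directly by \Cref{lem:CK_decay}. The second, lower‑order term is the main obstacle: $\nabla\eta$ contributes a factor $H^{-1}$ together with only an $L^2$‑norm of $\phi$ on the transition layer $\mathtt{N}^{\ell-1}(K)\setminus\mathtt{N}^{\ell-2}(K)$, and one must recover a gradient from it. Here I would use that $\phi\in W=\ker\xInterpolation$, so $(1-\xInterpolation)\phi=\phi$, and invoke the $L^2$‑part of the stability estimate for $\xInterpolation$, namely $H^{-1}\Norm{\phi}[L^2(K')]\lesssim\Norm{\nabla\phi}[L^2(\mathtt{N}(K'))]$ on each element $K'$ of the transition layer. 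Summing over $K'$ and noting that the enlarged layer still lies in $\Omega\setminus\mathtt{N}^{\ell-3}(K)$ gives $H^{-1}\Norm{\phi}[L^2(\mathtt{N}^{\ell-1}(K)\setminus\mathtt{N}^{\ell-2}(K))]\lesssim\exp^{-C\ell}\Norm{\nabla\phi}$ by \Cref{lem:CK_decay} once more. Collecting both terms and recalling $\phi=\Corrector_K v$ yields the asserted bound, where the fixed shift of the patch index by a few layers is absorbed into the constant $C$.
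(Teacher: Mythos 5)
Your proof is correct. Note that the paper itself does not prove this lemma --- it is quoted directly from \cite[Lem.~A.1]{DonHM23} --- so the comparison is against the standard argument in that reference, and your proposal is essentially that argument: the Galerkin-orthogonality/C\'ea reduction to a best-approximation problem in $W(\NbK)$, the competitor $(1-\xInterpolation)(\eta\,\Corrector_K v)$ built from a cut-off function, the identity $\phi-w=(1-\xInterpolation)((1-\eta)\phi)$ exploiting that $\xInterpolation$ is a projection with kernel $W$, the local $H^1$-stability of $\xInterpolation$ with one-layer support enlargement, and the recovery of a gradient from the $H^{-1}$-weighted $L^2$ term on the transition layer via the kernel property $\phi=(1-\xInterpolation)\phi$, all combined with the decay of \Cref{lem:CK_decay}. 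The only point left implicit is the regime of small $\ell$ (say $\ell\leq 3$), where the cut-off construction degenerates; there the claim follows directly from the stability bound $\Norm{\nabla\Corrector_K^{[\ell]}v}\lesssim\Norm{\nabla\Corrector_Kv}$ (test the localized problem with $\Corrector_K^{[\ell]}v$ and use the global definition of $\Corrector_K$), after which the exponential factor is absorbed into the constant --- consistent with your closing remark about index shifts.
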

\begin{lemma}[{\cite[Thm.~5.2]{DonHM23}}]\label{lem:C_decay_error}
    Let~$\ell\in\N$. Then for any $v\in\Hloc$ 
    \begin{equation*}
        \Norm{\nabla (\Corrector^{[\ell]} - \Corrector) v} \lesssim \exp^{-C\ell}\Norm{\nabla v}. 
    \end{equation*}
\end{lemma}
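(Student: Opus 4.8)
The plan is to control the energy $a\IP{e}{e}[]$ of the error $e:=(\Corrector^{[\ell]}-\Corrector)v\in W$, since $\Norm{\nabla e}^2\sim a\IP{e}{e}[]$ by the spectral bounds $\alpha,\beta$. Summing the defining equations of the element correctors shows that $\Corrector v=\sum_K\Corrector_Kv$ is the $a\IP{\cdot}{\cdot}[]$-orthogonal projection of $v$ onto $W$, i.e.\ $a\IP{\Corrector v}{w}[]=a\IP{v}{w}[]=\sum_K\rIP{v}{w}{K}$ for all $w\in W$. Since each $\Corrector_K^{[\ell]}v$ is supported in $\mathtt{N}^{\ell}(K)$, this yields the residual identity $a\IP{e}{w}[]=\sum_K\bigl(\rIP{\Corrector_K^{[\ell]}v}{w}{\mathtt{N}^{\ell}(K)}-\rIP{v}{w}{K}\bigr)$ for every $w\in W$, in which each summand vanishes as soon as $w\in W(\mathtt{N}^{\ell}(K))$, by the definition~\eqref{eq:CK_definition} of $\Corrector_K^{[\ell]}$. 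Choosing $w=e$ and subtracting, element by element, an arbitrary $w_K\in W(\mathtt{N}^{\ell}(K))$ therefore gives $a\IP{e}{e}[]=\sum_K\bigl(\rIP{\Corrector_K^{[\ell]}v}{e-w_K}{\mathtt{N}^{\ell}(K)}-\rIP{v}{e-w_K}{K}\bigr)$.

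First I would construct, for each $K$, a local test function $w_K\in W(\mathtt{N}^{\ell}(K))$ that reproduces $e$ on an interior patch. Let $\eta_K$ be a cutoff equal to one on $\mathtt{N}^{\ell-2}(K)$, supported in $\mathtt{N}^{\ell-1}(K)$, with $\Norm{\nabla\eta_K}[{L^\infty}]\lesssim H^{-1}$, and set $w_K:=(1-\xInterpolation)(\eta_Ke)$. Using that $\xInterpolation$ is a projection onto $U_H$ with kernel $W$ (so $\xInterpolation e=0$) and enlarges supports by at most one mesh layer, one checks that $w_K\in W$, that $\operatorname{supp}w_K\subset\mathtt{N}^{\ell}(K)$, and that the remainder satisfies $e-w_K=(1-\xInterpolation)\bigl((1-\eta_K)e\bigr)$, which vanishes on $\mathtt{N}^{\ell-3}(K)$. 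Consequently the source contribution $\rIP{v}{e-w_K}{K}$ drops out (because $K\subset\mathtt{N}^{\ell-3}(K)$ once $\ell$ is large enough), and the corrector contribution is supported in the outer shell $\mathtt{N}^{\ell}(K)\setminus\mathtt{N}^{\ell-3}(K)$.

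On that shell I would estimate $\rIP{\Corrector_K^{[\ell]}v}{e-w_K}{\mathtt{N}^{\ell}(K)}\le\beta\,\Norm{\nabla\Corrector_K^{[\ell]}v}[{L^2(\mathtt{N}^{\ell}(K)\setminus\mathtt{N}^{\ell-3}(K))}]\,\Norm{\nabla(e-w_K)}[{L^2(\mathtt{N}^{\ell}(K)\setminus\mathtt{N}^{\ell-3}(K))}]$. The first factor is exponentially small: writing $\Corrector_K^{[\ell]}v=\Corrector_Kv-(\Corrector_K-\Corrector_K^{[\ell]})v$ and combining the elementwise decay of \cref{lem:CK_decay} with the localization error of \cref{lem:CK_decay_error} gives $\Norm{\nabla\Corrector_K^{[\ell]}v}[{L^2(\mathtt{N}^{\ell}(K)\setminus\mathtt{N}^{\ell-3}(K))}]\lesssim\exp^{-C\ell}\Norm{\nabla\Corrector_Kv}\lesssim\exp^{-C\ell}\Norm{\nabla v}[{L^2(K)}]$, where the last step is the elementary corrector stability $\Norm{\nabla\Corrector_Kv}\lesssim\Norm{\nabla v}[{L^2(K)}]$ obtained by testing~\eqref{eq:CK_definition} (with $\ell=\infty$) against $\Corrector_Kv$. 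For the second factor I would use the local stability of $\xInterpolation$ together with the product rule for $\eta_K$; the factor $H^{-1}$ stemming from $\nabla\eta_K$ is absorbed by the local Poincaré estimate $\Norm{e}[{L^2(K')}]\lesssim H\Norm{\nabla e}[{L^2(K')}]$ valid for $e\in W=\ker\LLProjection$ (a consequence of the elementwise vanishing of $\LLProjection e$ and~\eqref{eq:Pi_estimate}), leaving $\Norm{\nabla(e-w_K)}[{L^2(\mathtt{N}^{\ell}(K)\setminus\mathtt{N}^{\ell-3}(K))}]\lesssim\Norm{\nabla e}[{L^2(\mathtt{N}^{\ell}(K)\setminus\mathtt{N}^{\ell-4}(K))}]$.

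Finally I would sum over $K$ by the Cauchy--Schwarz inequality and exploit the finite overlap of the shells $\mathtt{N}^{\ell}(K)\setminus\mathtt{N}^{\ell-4}(K)$, each point lying in at most $\lesssim\ell^{d}$ of them, so that $\sum_K\Norm{\nabla e}[{L^2(\mathtt{N}^{\ell}(K)\setminus\mathtt{N}^{\ell-4}(K))}]^2\lesssim\ell^{d}\Norm{\nabla e}^2$, while $\sum_K\Norm{\nabla v}[{L^2(K)}]^2=\Norm{\nabla v}^2$. This produces $a\IP{e}{e}[]\lesssim\ell^{d/2}\exp^{-C\ell}\Norm{\nabla v}\,\Norm{\nabla e}$ and hence $\Norm{\nabla e}\lesssim\ell^{d/2}\exp^{-C\ell}\Norm{\nabla v}$, after which the polynomial factor $\ell^{d/2}$ is absorbed into the exponential at the price of a slightly smaller $C$. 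I expect the main obstacle to be the construction of the preceding paragraph: securing a genuinely local $w_K\in W(\mathtt{N}^{\ell}(K))$ that agrees with $e$ on an interior patch while keeping $\nabla(e-w_K)$ controlled by $\nabla e$ on a thin, boundedly overlapping shell. This is exactly where the kernel property and local stability of $\xInterpolation$ and the Poincar\'e inequality on $W$ must be balanced against the number of patch layers; the remaining ingredients are coercivity, the quoted decay estimates, and the bookkeeping of layers.
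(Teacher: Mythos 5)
The paper does not prove this lemma at all: it is imported verbatim from \cite[Thm.~5.2]{DonHM23}, so there is no in-paper proof to compare against. Your argument is a correct reconstruction of the standard localization proof used in that reference (and throughout the LOD literature): element-wise splitting of $\Corrector^{[\ell]}-\Corrector$, Galerkin orthogonality of each summand against $W(\mathtt{N}^\ell(K))$, the test function $(1-\xInterpolation)(\eta_K e)$ built from a cutoff and the kernel/stability properties of $\xInterpolation$, the element-wise decay results (\Cref{lem:CK_decay}, \Cref{lem:CK_decay_error}) together with the Poincar\'e inequality on $\ker\LLProjection$, and a finite-overlap summation whose $\ell^{d/2}$ factor is absorbed into the exponential. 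The only blemishes are cosmetic layer-counting details (the shell produced by the stability of $\xInterpolation$ is $\mathtt{N}^{\ell+1}(K)\setminus\mathtt{N}^{\ell-4}(K)$ rather than $\mathtt{N}^{\ell}(K)\setminus\mathtt{N}^{\ell-4}(K)$, and $\ell\geq 4$ is implicitly assumed, with small $\ell$ handled by adjusting constants), which do not affect the argument.
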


The results in~\cite{KruM25} (in the context of the wave equation) show that the spaces $\tV^{[\ell]}$ are not well suited for time-dependent cases, which also holds similarly for parabolic equations. To illustrate the issue, let Assumption \ref{ass:regularity} hold, and we choose $p=k-1$. 
The reduced order can be observed when considering the projection error, that is, the error between the exact solution and its orthogonal projection into the multiscale space~$\tV^{[\ell]}$. Let the projection into the (localized) multiscale space $\tProjection^{[\ell]}\colon\Hloc\to\tV^{[\ell]}$ be given by
\begin{equation}\label{eq:ms_proj_def}
    \HIP{\tProjection^{[\ell]} v}{\tH v} = \HIP{v}{\tH v},
\end{equation}
for all $\tH v\in\tV^{[\ell]}$, and~$\tProjection=\tProjection^{[\infty]}$. For the sake of readability, we omit the arguments and note that the following estimate holds for all times~$t\in[0,T]$. Then, we have for the projection error $u-\tProjection u$, where $u$ denotes the weak solution of \eqref{eq:heat_var_form} 
\begin{multline}\label{eq:tprojection_error}
    \Norm{\nabla(u-\tProjection u)}^2 \lesssim \HIP{u-\tProjection u}{u-\tProjection u} = \IP{f}{u-\tProjection u} - \IP{\diff t u}{u-\tProjection u}\\
    \begin{aligned}
        &\lesssim H^{p+2}\Norm{f}[{\Space{H}{\mathcal{T}_H}[k]}]\,\Norm{\nabla(u-\tProjection u)} + H^2\Norm{\nabla \diff t u}\, \Norm{\nabla(u-\tProjection u)},
    \end{aligned}
\end{multline}
where in the equality we use Galerkin orthogonality, and in the second inequality we employ~$u-\tProjection u\in\kernel \LLProjection$, the Cauchy-Schwarz inequality, and \eqref{eq:Pi_estimate}. Here, we observe that the optimal rate~$r=p+2$ can only be observed if $k=1$, and is otherwise capped (independently of~$k$ and~$p$) at $r=2$. For more details we refer to~\cite[Lem.~3.13]{KruM25}. 
In equation~\eqref{eq:tprojection_error} we observe that the $p$-LOD method deals very well with the elliptic operator of the PDE but {the argument} cannot be extended to the time-dependent setting. We note that for sufficiently smooth coefficients~$A$ and initial data it is possible to increase the spatial regularity of $\diff t u$ and thus to obtain higher-order rates. 

The goal is to introduce so-called \emph{enriched corrections} that deal with the reduced order without assuming higher regularity than~$A\in L^\infty(\Omega)$. 
For any $v\in \LL$, define the \emph{enriched correction operator}~$\addCorrector v\in W$ as the solution to 
\begin{equation}\label{eq:tildeC_def}
  \HIP{\addCorrector v}{w} = -\IP{v}{w},
\end{equation}
for all $w\in W$. 
{\begin{remark}
    The  similarity of the enriched correction operator with the classical correction operator  comes from the design. As the classical correction only deals with the elliptic operator, the enriched correction operator is used in the following to `correct' also the problematic~$L^2$-term with the time derivative of~$u$. %
    Here, we first consider an ideal setting, where the enriched operator maps into an infinite-dimensional globally defined space. Moreover, we observe that the operator perfectly circumvents reduced convergence rates. 
    
    We also note the similarity between~\eqref{eq:tildeC_def} and the right-hand side corrections introduced in~\cite{Hellman2017}, though the purpose is completely different. 
\end{remark}}

\begin{lemma}[Error of the ideal enriched correction]\label{lem:error_addCorr}
    Consider Assumption \ref{ass:regularity}. If we choose $p=k-1$ and~$\ell=\infty$, then the projection of $u$ into the multiscale space combined with the enriched correction operator $\tCorrector{u}$ converges optimally,~i.e., 
    \begin{equation}\label{eq:rho_estimate}
      \begin{aligned}
        \sup_{t\in[0,T]} \Norm{\nabla (u(\cdot,t) - (\tProjection u(\cdot,t) + \tCorrector{u(\cdot,t)}))} \lesssim H^{p+2} \Norm{f}[{\Space{C}{[0,T];\Space{H}{\mathcal{T}_H}[k]}[]}].
      \end{aligned}
    \end{equation}
\end{lemma}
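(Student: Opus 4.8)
The plan is to exploit the $a(\cdot,\cdot)$-orthogonal splitting $\Hloc=\widetilde V_H\oplus W$ and to show that the enriched correction applied to $\diff t u$ cancels \emph{exactly} the order-reducing term identified in~\eqref{eq:tprojection_error}; here the correction in the statement is $\tCorrector{u}=\addCorrector(\diff t u)$. First I would record that, in the ideal setting $\ell=\infty$, summing the defining relation~\eqref{eq:CK_definition} over all $K\in\mathcal{T}_H$ gives $\HIP{\Corrector v}{w}=\HIP{v}{w}$ for every $w\in W$, so that $\HIP{(1-\Corrector)v_H}{w}=0$ for all $v_H\in U_H$. Hence $\widetilde V_H=(1-\Corrector)U_H$ and $W$ are $a$-orthogonal. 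Since $\xInterpolation$ is a projection onto $U_H$ with kernel $W$ we have $\Hloc=U_H+W$, and as $U_H=(1-\Corrector)U_H+\Corrector U_H\subset\widetilde V_H+W$ the spaces $\widetilde V_H$ and $W$ together span $\Hloc$; therefore $W$ is precisely the $a$-orthogonal complement of $\widetilde V_H$. In particular the Galerkin projection~\eqref{eq:ms_proj_def} yields $u-\tProjection u\in W$, and as $\addCorrector(\diff t u)\in W$ by construction, the full error
\[
  e \coloneqq u - \bigl(\tProjection u + \addCorrector(\diff t u)\bigr)
\]
lies in $W$ for every fixed $t\in[0,T]$.

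The key step is to evaluate $\HIP{e}{w}$ for an arbitrary $w\in W$. Using that $\tProjection u\in\widetilde V_H$ is $a$-orthogonal to $w$ and then inserting the weak form~\eqref{eq:heat_var_form} (valid against any $w\in\Hloc$, with the dual brackets reducing to $\LL$-inner products since $f,\diff t u\in\LL$), I would obtain
\[
  \HIP{u-\tProjection u}{w} = \HIP{u}{w} = \IP{f}{w} - \IP{\diff t u}{w}.
\]
The defining property~\eqref{eq:tildeC_def} of the enriched correction gives $\HIP{\addCorrector(\diff t u)}{w}=-\IP{\diff t u}{w}$, so subtracting it removes the problematic second term and leaves
\[
  \HIP{e}{w} = \IP{f}{w}\qquad\text{for all } w\in W.
\]
This cancellation is the crux of the argument and is precisely the reason $\addCorrector$ is constructed as in~\eqref{eq:tildeC_def}.

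To conclude I would test with $w=e\in W$. Since $e\in\ker\LLProjection$ and $\LLProjection$ is self-adjoint, $\IP{f}{e}=\IP{(1-\LLProjection)f}{e}$. Combining Cauchy--Schwarz with the projection estimate~\eqref{eq:Pi_estimate} (taking $\kappa=p+1=k$) to bound $\Norm{(1-\LLProjection)f}\lesssim H^{p+1}\Norm{f}[{\Space{H}{\mathcal{T}_H}[k]}]$, the Poincar\'e-type bound $\Norm{e}\lesssim H\Norm{\nabla e}$ valid for $e\in W$ (the seminorm form of~\eqref{eq:Pi_estimate} with $\kappa=1$, as already used in~\eqref{eq:tprojection_error}), and the coercivity $\alpha\Norm{\nabla e}^2\leq\HIP{e}{e}$, gives
\[
  \Norm{\nabla e}\lesssim H^{p+2}\,\Norm{f}[{\Space{H}{\mathcal{T}_H}[k]}].
\]
Taking the supremum over $t\in[0,T]$ and bounding the pointwise spatial norm by the $\Space{C}{[0,T];\Space{H}{\mathcal{T}_H}[k]}[]$-norm then yields~\eqref{eq:rho_estimate}.

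The main obstacle is conceptual rather than computational: one has to recognise that testing the projection error against the full remainder space $W$ and substituting the PDE produces exactly $-\IP{\diff t u}{w}$, and that $\addCorrector$ is tailored to annihilate this term; once this is seen, the surviving contribution $\IP{f}{w}$ is already of optimal order and every remaining estimate is standard and identical to those in~\eqref{eq:tprojection_error}. The only point needing mild care is the well-posedness of $\addCorrector(\diff t u)$, which is immediate since Assumption~\ref{ass:regularity} guarantees $\diff t u(\cdot,t)\in\LL$ for each $t$.
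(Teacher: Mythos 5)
Your proof is correct and takes essentially the same route as the paper's: the cancellation you isolate, namely $\HIP{e}{w}=\IP{f}{w}-\IP{\diff t u}{w}+\IP{\diff t u}{w}=\IP{f}{w}$ obtained from the weak form, the $a$-orthogonality of $\tV$ and $W$, and the defining property~\eqref{eq:tildeC_def}, is exactly the paper's key step, and your concluding bound via~\eqref{eq:Pi_estimate} (split as $H^{p+1}$ for $f$ and an extra $H$ from $e\in\ker\LLProjection$) together with coercivity matches the paper's final estimate. The only difference is that you spell out details the paper leaves implicit, such as the decomposition $\Hloc=\tV\oplus_a W$, why $u-\tProjection u\in W$, and the well-posedness of $\addCorrector(\diff t u)$.
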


\begin{proof}
    We omit the argument~{$t\in[0,T]$} for readability and note that the estimate holds for all~$t\in[0,T]$. Let~$u - (\tProjection u + \tCorrector{u}) \eqqcolon \psi\in W$. Then $\psi\in\kernel \LLProjection$ and by the definition of~$\tProjection$ and~$\addCorrector$, we obtain with~\eqref{eq:Pi_estimate} 
    \begin{equation*}
      \begin{aligned}
        \HIP{u - (\tProjection u + \tCorrector{u})}{\psi} &= \IP{f}{\psi} - \IP{\diff t u}{\psi} + \IP{\diff t u}{\psi} \\
        & \lesssim H^{p+2} \Norm{f}[{\Space{H}{\mathcal{T}_H}[k]}] \, \Norm{\nabla\psi}.
      \end{aligned}
    \end{equation*}
    Now~\eqref{eq:rho_estimate} follows from $\Norm{\nabla \psi}^2\lesssim a\IP{\psi}{\psi}[]$ and taking the supremum over all~$t$. 
\end{proof}
\begin{remark}
    We note here that the proof can be applied in similar fashion to the wave equation, if the enriched correction operator~$\addCorrector$ is applied to the second time derivative of the solution~$u$.
\end{remark}

Lemma~\ref{lem:error_addCorr} indicates that the enriched correction operator can recover the optimal order~$r=p+2$. However, using this operator is practically unfeasible. This stems on the one hand from the fact that we have no access to the function~$\diff t u$. We can work around that problem by replacing~$\diff t u$ for instance with its projection into the multiscale space~$\tProjection (\diff t u)$, and thus obtaining an approximation 
\begin{equation}\label{eq:solution_approximation_expansion_j1}
    u\approx\tProjection u + \addCorrector(\diff t \tProjection u)
\end{equation}
Now we can plug this approximation for~$u$ into the second term of~\eqref{eq:solution_approximation_expansion_j1} again. This can be carried out recursively to obtain 
\begin{equation}\label{eq:solution_approximation_expansion}
    u \approx \tProjection u + \addCorrector(\diff t \tProjection u) + \addCorrector^2(\diff t[2] \tProjection u) + \dots + \addCorrector^\nu(\diff t[\nu] \tProjection u). 
\end{equation}
On the other hand, the enriched correction operator is obtained by solving a global problem such that a good localization strategy is required that produces minimal overhead compared to the localization of the correction operator~$\Corrector$. 

{For the localization of the enriched correction operator, we present below a practical version that is used to construct the enriched multiscale spaces. In order to prove the desired localization result, the construction is more involved and is postponed to \Cref{subsec:proof_loc}. Let $\Lambda_{K,i}\in V_H$ be a basis function, then the multiscale basis function is given by $\tilde{\Lambda}_{K,i}^{[\ell]} = (1-\Corrector^{[\ell]}) \xInterpolation\Lambda_{K,i}\in W(\NbK)$, that is supported on the~$\ell$-patch around~$K$. %
The enriched basis functions are now given as the solution to the enriched corrector problem~\eqref{eq:tildeC_def} restricted to the respective patches. That is, we define the enriched basis function~$\check{\Lambda}_{K,i}^{\nu,\mathrm{loc}} \in W(\Nb)$ for~$\nu=1,\dots,j$ recursively by %
\begin{equation*}
\begin{aligned}
    \rIP{\check{\Lambda}_{K,i}^{1,\mathrm{loc}}}{w}{\NbK} &= -\IP{\tilde{\Lambda}_{K,i}^{[\ell]}}{w}[L^2(\NbK)],\\
    \rIP{\check{\Lambda}_{K,i}^{\nu,\mathrm{loc}}}{w}{\NbK} &= -\IP{\check{\Lambda}_{K,i}^{\nu-1,\mathrm{loc}}}{w}[L^2(\NbK)],\qquad \nu=2,\dots,j,
\end{aligned}
\end{equation*}
for all~$w\in W(\Nb)$. It is straight-forward to see that this construction yields functions that are all supported on patches of equal size. The justification for this construction will be given in \Cref{subsec:proof_loc}. 

We note that with the standard approach to derive the localization error (as for the classical correction operator), only a suboptimal result can be achieved. 
That is, for the classical correction operator, the right-hand side is supported locally on one element~$K$, and the standard approach to derive localization error estimates yields an exponential decay property away this element. For the enriched basis function, however, we would need to consider a patch around the patch~$\Nb$, resulting $\Nb[K][2\ell]$. Overall, the size of the patch would increase with the recursion in $\nu$. Instead, in \Cref{subsec:proof_loc} we make use of the fact that the basis function on the right-hand side already has a decay, and derive a sharper localization estimate. %

Eventually, the enrichment space is now defined as the span of the computed basis functions, i.e., %
\begin{equation}\label{eq:loc_enriched_basis}
    {\widehat{W}^{j,\mathrm{loc}}_H} = \sum_{\nu=1}^{j}\,\spann \bigcup_{K\in\mathcal{T}_H}\{\check{\Lambda}_{K,i}^{\nu,\mathrm{loc}}\}_{i=1}^{(p+1)^d}.
\end{equation}
The final enriched multiscale space is then given by the direct sum %
\begin{equation*}
    \cVloc=\tVloc + \hWloc.
\end{equation*}
That is, we enrich the multiscale space with localized functions in the kernel of the~$L^2$-projection that are able to capture relevant features introduced by the time-dependence of the  parabolic equation. }%
{The creation of these spaces introduces a computational overhead which scales with the  polynomial degree $p$, the number of layers~$\ell$, the number of enriched corrections~$j$ and the dimension~$d$. However, due to an overall reduction of the dimension and parallelization, the eho-LOD spaces provide a well-chosen spatial discretization that outperforms classical methods after a certain amount of time steps. }

\subsection{{Semi-discrete} enriched higher-order LOD method}\label{subsec:eholod_method}
{In the following, we introduce our method regarding a discretization in space only. This makes the analysis more readable and keeps the notation to a minimum. We emphasize, however, that an additional temporal error analysis can be derived with classical arguments.} We use $\cVloc$ as trial and test space for the weak formulation \eqref{eq:heat_var_form} to obtain the {\emph{semi-discrete enriched higher-order (eho-)LOD method}}: Seek $\check{u}_H\colon [0,T] \to \cVloc$ such that 
\begin{equation}\label{eq:practical_method}
    \IP{\diff t \check{u}_H(t)}{\check{v}_H}+\HIP{\check{u}_H(t)}{\check{v}_H} = \IP{f(t,\cdot)}{\check{v}_H},
\end{equation}
for all $\check{v}_H\in\cVloc$ and all~$t\in[0,T]$, with the initial condition 
\begin{equation}\label{eq:sd_init_condition_practical}
    \cu(0) = \tProjectionloc u_0 + \hProjectionloc u_0,
\end{equation}
where the projection into the localized multiscale space is defined in~\eqref{eq:ms_proj_def}. Further, 
\begin{equation}\label{eq:add_proj_def}
  \hProjectionloc u = \sum_{\nu=1}^j (\addCorrector^{\mathrm{loc}})^{\nu}(\diff t[\nu] \tProjectionloc u)\quad\text{and}\quad  \hProjection u = \sum_{\nu=1}^j \addCorrector^{\nu}(\diff t[\nu] \tProjection u).
\end{equation}
Finally we can state the main error estimate. 
\begin{theorem}[Error of the semi-discrete solution]\label{thm:semi_discrete}
  Let Assumption~\ref{ass:regularity} hold with $k\geq 1$ and $m=\lceil \frac{k-1}{2}\rceil + 2$. Choose the optimal parameters,~i.e., the polynomial degree~$p=k-1$, the number of enriched corrections~$j=\lceil \tfrac{p}{2}\rceil$, the localization parameter~$\ell\sim C_p|\log H|$, and the localization parameters for the enriched corrections~$\lambda_G$ as in~\Cref{thm:localization}. Further, let $u$ be the solution to \eqref{eq:heat_var_form}, and let $\check{u}_H\in C^m([0,T];\cVloc)$ solve \eqref{eq:practical_method}. Then, the error~$e(t)=u(\cdot,t)-\check{u}_H(t)$ can be bounded independently of $\varepsilon$ by 
  \begin{equation*}
    \sup_{t\in[0,T]}\big[\Norm{e(t)} + \Norm{\nabla e(t)}\big] \lesssim_{T,p} H^{p+2}\, C_{\mathrm{data}}, 
  \end{equation*}
  with $C_\mathrm{data}$ from \eqref{eq:Cdata}. 
\end{theorem}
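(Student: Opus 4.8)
The plan is to run the classical elliptic-projection (Ritz) splitting for parabolic problems, adapted to the enriched multiscale space. I introduce the \emph{enriched Ritz projection}
$R_H u(t) := \tilde P^{[\ell]}u(t) + \sum_{\nu=1}^{j}(\hat C^{\mathrm{loc}})^{\nu}\partial_t^{\nu}\tilde P^{[\ell]}u(t)\in\check V_H^{\mathrm{loc}}$, which is exactly the object whose value at $t=0$ equals the discrete initial datum \eqref{eq:sd_init_condition_practical}. Writing $e=u-\check u_H=\rho+\theta$ with projection error $\rho:=u-R_Hu$ and discrete error $\theta:=R_Hu-\check u_H\in\check V_H^{\mathrm{loc}}$, I will bound $\sup_t\|\nabla\rho\|$, $\sup_t\|\partial_t\rho\|$ and $\sup_t\|\nabla\partial_t\rho\|$ by $H^{p+2}C_{\mathrm{data}}$, and then close an energy estimate for $\theta$ by Gr\"onwall. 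Since $u(\cdot,0)=u_0$, the construction \eqref{eq:sd_init_condition_practical} yields $\theta(0)=0$, removing the initial contribution from the Gr\"onwall bound.

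\textbf{Projection error (the core).} I would first work in the ideal regime $\ell=\infty$ with the global operators $\tilde P,\hat C$. Differentiating the PDE in time and using the compatibility in \Cref{ass:regularity}, each $\partial_t^{n}u$ solves a problem of the same type with right-hand side $\partial_t^{n}f$; combining the defining relation $a(\hat Cv,w)=-(v,w)$ of the enriched corrector with the $a$-orthogonality $W\perp_a\tilde V_H$ (so that $u-\tilde Pu\in W$) gives the one-step identity $(1-\tilde P)\partial_t^{n}u=\hat C(\partial_t^{n+1}u-\partial_t^{n}f)$. Unrolling this $j$ times produces the exact remainder formula
\begin{equation*}
u-\Big(\tilde Pu+\sum_{\nu=1}^{j}\hat C^{\nu}\partial_t^{\nu}\tilde Pu\Big)=\hat C^{\,j+1}\partial_t^{\,j+1}u-\sum_{n=0}^{j}\hat C^{\,n+1}\partial_t^{n}f.
\end{equation*}
The decisive observation is that $\hat C$ acts only on the part of its argument orthogonal to $V_H$: since $w\in W=\ker\Pi_H$ we have $(v,w)=((1-\Pi_H)v,w)$, and together with the kernel estimate $\|z\|\lesssim H\|\nabla z\|$ for $z\in W$ (i.e.\ \eqref{eq:Pi_estimate} with $\kappa=1$) this gives $\|\nabla\hat Cv\|\lesssim H\|(1-\Pi_H)v\|$ and $\|\nabla\hat C^{m}v\|\lesssim H^{2}\|\nabla\hat C^{m-1}v\|$. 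Applying \eqref{eq:Pi_estimate} with $\kappa=1$ to the solution and with $\kappa=k=p+1$ to the (smooth) data $f$, this propagates to $\|\nabla\hat C^{\,j+1}\partial_t^{\,j+1}u\|\lesssim H^{2j+2}\|\nabla\partial_t^{\,j+1}u\|$ and $\|\nabla\hat C^{\,n+1}\partial_t^{n}f\|\lesssim H^{p+2}\|\partial_t^{n}f\|_{H^{k}(\mathcal T_H)}$. With $j=\lceil p/2\rceil$ we have $2j+2\ge p+2$, so the ideal projection error is $O(H^{p+2}C_{\mathrm{data}})$, consuming exactly the temporal regularity $\partial_t^{\,j+1}u\in H^1$ and no spatial regularity beyond $H^1$ --- this is the mechanism of \Cref{lem:error_addCorr} pushed to higher order via \eqref{eq:Cdata}. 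The localization defects are then added on top: the difference between $\tilde P^{[\ell]}$ and $\tilde P$ is controlled by \Cref{lem:C_decay_error}, and the difference between $(\hat C^{\mathrm{loc}})^{\nu}$ and $\hat C^{\nu}$ applied to $(1-C^{[\ell]})\,\cdot$ by \Cref{thm:localization}; each contributes $\exp^{-C\ell}\lesssim H^{p+2}$ once $\ell\sim C_p|\log H|$ with $C_p$ large (the admissibility $H^2\ell^{d+1}\lesssim1$ holds as $H\to0$). Because $A$ is time-independent, all operators commute with $\partial_t$, so $\partial_t\rho$ is itself the enriched projection error of $\partial_t u$ and the same bounds apply after shifting every time index up by one; this is precisely why $m=j+2=\lceil\tfrac{k-1}{2}\rceil+2$ time derivatives are required.

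\textbf{Discrete error.} Subtracting the scheme \eqref{eq:practical_method} from the weak form \eqref{eq:heat_var_form} tested with $\check v\in\check V_H^{\mathrm{loc}}\subset H^1_0(\Omega)$ gives the error equation $(\partial_t\theta,\check v)+a(\theta,\check v)=-(\partial_t\rho,\check v)-a(\rho,\check v)$. Unlike the classical Ritz projection, $R_H$ is not $a$-orthogonal to the enriched part of $\check V_H^{\mathrm{loc}}$, so $a(\rho,\check v)$ does not vanish; it is nonetheless harmless, being bounded by $\|\nabla\rho\|\,\|\nabla\check v\|\lesssim H^{p+2}C_{\mathrm{data}}\|\nabla\check v\|$. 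Testing with $\check v=\theta$, using coercivity $\alpha\|\nabla\theta\|^2\le a(\theta,\theta)$, Young's inequality and $\theta(0)=0$, Gr\"onwall yields $\sup_t\|\theta\|+\big(\int_0^T\|\nabla\theta\|^2\big)^{1/2}\lesssim H^{p+2}C_{\mathrm{data}}$. To upgrade to $\sup_t\|\nabla\theta\|$ I test with $\check v=\partial_t\theta$, rewrite the awkward term via $a(\rho,\partial_t\theta)=\tfrac{d}{dt}a(\rho,\theta)-a(\partial_t\rho,\theta)$ and integrate in time; the data that appear are exactly $\|\partial_t\rho\|$ and $\|\nabla\partial_t\rho\|$, both already shown to be $O(H^{p+2}C_{\mathrm{data}})$. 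The triangle inequality on $e=\rho+\theta$ then gives the claim, with the $T$-dependence entering only through the Gr\"onwall constant.

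\textbf{Main obstacle.} The energy estimate is routine once the projection bounds are in hand; the delicate part is the uniform-in-time projection error in its \emph{localized} form. Reconciling the Galerkin multiscale projection $\tilde P^{[\ell]}$ appearing in $R_H$ with the corrector-based decay estimates \Cref{lem:DG_decay}, \Cref{lem:DG_decay_error} and \Cref{thm:localization}, and propagating these exponentially small defects cleanly through the $\nu$-fold products $(\hat C^{\mathrm{loc}})^{\nu}$ for every $\nu\le j$ and through one additional time derivative, is where the argument is technically hardest. This is exactly the bookkeeping for which the patch-varying choice of $\lambda_G$ in \Cref{thm:localization} is essential, ensuring the localization error of the full enriched expansion stays at the level $\exp^{-C\ell}$ rather than degrading with $\nu$.
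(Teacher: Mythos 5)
Your proposal is sound and shares the paper's overall skeleton --- your $\rho=u-R_Hu$ is exactly the paper's localized defect $\varphi^{\mathrm{loc}}$, your regularity count $m=j+2$ and parameter choices coincide with theirs, and I find no step that fails --- but two of your steps follow genuinely different routes. For the core projection error, the paper never writes a closed formula: it bounds $a(\varphi,\varphi)$ by recursively peeling off one time derivative at a time through the definition of $\hat{C}$ and \eqref{eq:Pi_estimate}, and terminates the recursion with an ``artificial expansion'' of the enriched space to $\nu=j+1$ (see \eqref{eq:dtu_hPu_split}), arriving at \eqref{eq:phi_estimate_complete}. You instead unroll the exact one-step identity $(1-\tilde{P})\partial_t^{n}u=\hat{C}(\partial_t^{n+1}u-\partial_t^{n}f)$ --- valid by the $a$-orthogonality of $W$ and $\tilde{V}_H$, the time-differentiated PDE, coercivity of $a$ on $W$, and commutation of all operators with $\partial_t$ since $A$ is time-independent --- into an exact remainder formula, and then estimate its two terms by the operator bounds $\|\nabla\hat{C}v\|\lesssim H\|(1-\Pi_H)v\|$ and $\|\nabla\hat{C}^{m}v\|\lesssim H^{2}\|\nabla\hat{C}^{m-1}v\|$. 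This yields the same $(H^{2j+2}+H^{p+2})C_{\mathrm{data}}$ bound but makes the mechanism transparent: the paper's recursive chain of variational estimates is precisely the inequality version of your identity, and your version is arguably cleaner. For the time-marching part, you use the classical Ritz splitting $e=\rho+\theta$ with Gr\"onwall, $\theta(0)=0$, and the $\tfrac{d}{dt}a(\rho,\theta)$ integration-by-parts trick; the paper avoids Gr\"onwall altogether by exploiting Galerkin orthogonality directly on $e$, testing with both $e$ and $\partial_te$ and summing the two identities so that $\|\partial_te\|^2$ and $a(e,e)$ cancel, after which plain time integration gives \eqref{eq:error_estimate_philoc}. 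Both energy arguments are equivalent in substance and consume the same input, namely $\sup_t\|\nabla\varphi^{\mathrm{loc}}\|$ and $\sup_t\|\nabla\partial_t\varphi^{\mathrm{loc}}\|$.

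One point you should make explicit in the localization step: splitting $R_Hu-(\tilde{P}u+\hat{P}u)$ produces, besides the two differences you name, a third defect term $\sum_{\nu=1}^{j}\hat{C}^{\nu}\partial_t^{\nu}(\tilde{P}^{[\ell]}-\tilde{P})u$, which the paper treats by a separate recursion, \eqref{eq:Dnu_CminCell_first_step}--\eqref{eq:Dnu_CminCell_complete_estimate}. It is controlled by tools you already have (the $H^{2}$-contraction of $\hat{C}$ combined with \Cref{lem:C_decay_error} applied to $\partial_t^{\nu}u$), and to invoke \Cref{thm:localization} you should also record that $\partial_t^{\nu}\tilde{P}^{[\ell]}u$ is indeed of the required form $(1-C^{[\ell]})v_H$ with $\|\nabla v_H\|\lesssim\|\nabla\partial_t^{\nu}u\|$. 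These are bookkeeping omissions rather than conceptual gaps --- you correctly flagged this part as the main technical obstacle.
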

\begin{remark}
    In this work, we do not track the dependence on the polynomial degree. {Such dependencies are studied in~\cite{Mai21}. We emphasize, however, that the (hidden) constants in the error estimate of~\Cref{thm:semi_discrete} have an advantageous scaling with respect to the polynomial degree~$p$}, which can also be observed in the numerical experiments in~\cref{sec:numerical_examples}. %
    {The constant~$C_p$ theoretically scales like~$p^2$ as derived in~\cite{Mai21}. However, numerical experiments clearly indicate that this scaling is pessimistic and a scaling as~$p^s$ for some $0 < s < 1$ seems more appropriate.}%
\end{remark}
\begin{remark}[Parameters]\label{rem:parameters}
    We shortly explain the choice of parameters in~\Cref{thm:semi_discrete}. It is possible to deal with right-hand sides that are only~$L^2$-regular,~i.e.,~$k=0$. In this case, the optimal choice (in terms of computational effort) is $p=j=0$, and we obtain a convergence rate of $r=1$, {and we require~$m=2$} (see equation~\eqref{eq:tprojection_error}). 
    In the proof, the restriction~$m\geq j+2$ will arise, which is always fulfilled with the choices in the theorem. 
    For $k=1$, we can also choose $p=j=0$, and {we have $m\geq 2$}. This case is excluded from the proof in~\cref{subsec:proof_main} but is rather standard, see~\Cref{rem:classical_holod}. 
\end{remark}

\begin{remark}[Practical implementation]\label{rem:practical_split}
    {In practice, the corrector problems, for both the classical corrector and the enriched corrector, need to be numerically solved to set up the spatial discretization. This is typically done by discretizing the respective equation with  a finite element space~$V_h$ with a mesh size $h$ that resolves the variations of the coefficient, i.e., $h\lesssim\varepsilon$. This results in an error that scales with the fine mesh size~$h$ and is thus small compared to the coarse error estimate of the (enriched) LOD method. The scale $h$ also needs to be fine enough to sufficiently capture polynomials on the mesh $\mathcal{T}_H$. That is, $H/h$ needs to be big enough.}%

    {Further, one has to consider the technical issue that the enriched corrections are of relatively small magnitude (scaled by about~$H^2$ per additional enrichment). This results in small eigenvalues for the system matrix and possibly badly conditioned systems. Here, we exploit that within each enrichment level~$\nu=0,\dots,j$ the norms are of comparable size. We use a Schur complement solver recursively for each of the levels. Thus, we only deal with better conditioned small matrices. In fact, we employ that it is possible to rewrite the eho-LOD method~\eqref{eq:practical_method} into a system of two coupled equations by splitting up the spaces,~i.e., we seek~$\tz\colon[0,T]\to\tVloc$ in the classical higher-order multiscale space and~$\hz\colon[0,T]\to\hWloc$ in the enriched multiscale space such that 
    \begin{equation*}
      \begin{alignedat}{3}
        &\IP{\diff t (\tz + \hz )}{\tv} && +\HIP{\tz + \hz }{\tv}&&=\IP{f}{\tv},\\
        &\IP{\diff t (\tz + \hz )}{\hv} && +\HIP{\tz + \hz}{\hv}&&=\IP{f}{\hv},
      \end{alignedat}
    \end{equation*}
    for all $\tv\in\tVloc$ and $\hv\in\hWloc$. This split can further be employed for each of the levels~$\nu=1,\dots,j$. That is, after a discretization in time the system is a block matrix system that may be solved using a Schur complement-type solver. }%
\end{remark}

\section{Proofs of the main theorems}\label{sec:proofs}

\subsection{Proofs of the localization results}\label{subsec:proof_loc}

{This section is devoted to proving exponential decay properties of the enriched correction operator. The decay will be proven similarly to the classical correction, i.e., we restrict the right-hand side of localized enriched correction~\eqref{eq:loc_enriched_basis} to element-wise contributions. However, in order to avoid growing patches with the number of enrichments, the enriched correction operator requires a novel localization strategy. 

For the localization of~$\addCorrector$, defined in~\eqref{eq:tildeC_def}, we first construct localized element-wise enriched correction operators analogously to the construction of the localized element-wise corrector~$\Corrector^{[\ell]}_K$. 
Let~$G\in\mathcal{T}_H$, and~$\lambda\in\N$. We define the \emph{localized element-wise enriched correction operator}~$\addCorrector_G^{[\lambda]}$ for~$v\in\LL$ as 
\begin{equation}\label{eq:loc_enriched_corrector_def}
\rIP{\addCorrector_G^{[\lambda]} v}{w}{\Nb[G][\lambda]}=-\IP{v}{w}[L^2(G)],
\end{equation}
for all $w\in W(\Nb[G][\lambda])$. 
{Up to this point, the definition of the localized element-wise enriched correction operator is analogous to the classical LOD method. For a sharper localization estimate, we allow $\lambda$ to depend on $G$, and write $\lambda_G$ instead. }%
For the definition of the \emph{localized enriched correction operator}~$\addCorrector^\mathrm{loc} = \sum_{G\in\cal T_H}\addCorrector_G^{[\lambda_G]}$, we choose distinct localization parameters~$\lambda_G\in\N$ for each element~$G\in\mathcal{T}_H$. This freedom will be crucial for proving the localization results later. 
Analogously to above, we can define the (global) element-wise enriched correction operators~$\addCorrector_G = \addCorrector_G^{[\infty]}$ for $\lambda_G=\infty$, and it follows that~$\addCorrector = \sum_{G\in\cal T_H}\addCorrector_G$. 

Due to the analogous construction of the element-wise enriched correction operators to the classical correction operators~$\Corrector_K^{[\ell]}$, the following lemmas can be proven employing the same arguments as in Lemma~\ref{lem:CK_decay} and Lemma~\ref{lem:CK_decay_error}, respectively. 
{\begin{corollary}\label{lem:DG_decay}
    Let $G\in\mathcal{T}_H$ and $\lambda_G\in\N$. Then we have for any $v\in\LL$ 
    \begin{equation*}
        \Norm{\nabla \addCorrector_G v}[L^2(\Omega\setminus\texttt{N}^{\lambda_G}(G))] \lesssim \exp^{-C\lambda_G}\Norm{\nabla\addCorrector_G v}.
    \end{equation*}
\end{corollary}%
\begin{corollary}\label{lem:DG_decay_error}
    Let $G\in\mathcal{T}_H$ and $\lambda_G\in\N$. Then we have for any $v\in\LL$ 
    \begin{equation*}
        \Norm{\nabla (\addCorrector_G^{[\lambda_G]} v - \addCorrector_G v)} \lesssim \exp^{-C\lambda_G}\Norm{\nabla\addCorrector_G v}.
    \end{equation*}
\end{corollary}}

Based on the intricate construction of the enriched basis functions, we can show the following localization error for the enriched basis functions. 

\begin{theorem}\label{thm:localization}
    Let~$\ell,\nu\in\N$ with~$H^2\ell^{d+1}\lesssim 1$. For any~$K\in\mathcal{T}_H$ and~$G_1\subset\mathtt{N}^\ell(K)$, choose~$\lambda_{G_1}(K)\in\N$ such that~$\lambda_{G_1}({K})=\ell-\mu_1$, where~$\mu_1 = \distance({G_1},K)$, and the distance~$\distance(\cdot,K)$ to the element~$K\in\mathcal{T}_H$ is defined by 
    \begin{equation*}
        \distance(S,K) = \mu,\qquad\textrm{if}~S\cap\mathtt{N}^\mu(K)\neq\emptyset\quad\mathrm{and}\quad S\cap\mathtt{N}^{\mu-1}(K)=\emptyset. 
    \end{equation*}
    Further, for any sequence of elements~$G_{i+1}\in\mathtt{N}^{\lambda_{G_{i}}(G_{i-1})}(G_i)$ for~$i=1,\dots,\nu$, where we formally set~$G_0=K$, choose~$\lambda_{G_{i+1}}(G_i)\in\N$ such that~$\lambda_{G_{i+1}}(G_i) = \lambda_{G_{i}}(G_{i-1}) - \mu_{i+1} $ with~$\mu_{i+1}=\distance(G_{i+1},G_i)$. 
    Then, for any function $v\in\Hloc$ we have 
    \begin{equation*}
        \Norm{\nabla ((\addCorrector^{\mathrm{loc}})^\nu - \addCorrector^\nu)(1-\Corrector^{[\ell]}) v} \lesssim \exp^{-C\ell}\Norm{\nabla v}. 
    \end{equation*}
\end{theorem}

\begin{figure}
  \centering
    \begin{tikzpicture} [scale=0.4]
        \draw[fill=black!25!white] (6,1) rectangle (13,8);
        \draw[fill=black!50!white] (6,5) rectangle (9,8);
        \draw[fill=black!100!white] (9,4) rectangle (10,5);
        \draw[fill=black!75!white] (7,6) rectangle (8,7);
        \draw[line width=0.2mm, draw=black, fill=black!20!white] (4,0) grid  (15,9);
        \node[white] at (9.5, 4.5) {$\pmb{K}$};
        \node[white] at (7.5, 6.5) {$\pmb{G_1}$};
    \end{tikzpicture}
    \caption{\small Illustration of the element~$K$ with patch~$\Nb[K][3]$ and element~$G_1$ with patch~$\Nb[G_1][1]$. Here we have~$\distance(G_1,K)=\mu = 2$.}\label{fig:patches}
\end{figure}

\begin{remark}\label{rem:enriched_localization}
    In the proof of \Cref{thm:localization}, we split each of the operators~$\addCorrector$, $\addCorrector^\mathrm{loc}$, and~$1-\Corrector^{[\ell]}$ into their element-wise contributions. This leads to chains of elements that lie within respective patches, such that the definitions for each~$\addCorrector$ and~$\addCorrector^\mathrm{loc}$ differ strongly. 
    To illustrate this, we consider the term~$\zeta = \addCorrector^\mathrm{loc}(1-\Corrector^{[\ell]})v$, and~\Cref{fig:patches}. We have 
    \begin{equation*}
        \zeta = \sum_{K\in\mathcal{T}_H} \addCorrector^\mathrm{loc}(1\vert_K - \Corrector_K^{[\ell]})v \eqqcolon \sum_{K\in\mathcal{T}_H} \addCorrector^\mathrm{loc}\zeta_K, 
    \end{equation*}
    where~$\supp (\zeta_K) \subset\Nb$. In the next step, we fix one element~$K\in\mathcal{T}_H$ and have 
    \begin{equation*}
        \addCorrector^\mathrm{loc}\zeta_K = \sum_{G_1\in\Nb} \addCorrector_{G_1}^{[\lambda_{G_1}(K)]} \zeta_K. 
    \end{equation*}
    This now is the crucial step. For each element~$K$, we define the localized enriched correction operator for each element~$G_1\in\Nb$ separately. Thus, every~$\addCorrector^\mathrm{loc}$ is defined differently for each~$K$. Due to the definition of the localized enriched correction operator~\eqref{eq:loc_enriched_corrector_def}, we may choose each~$\lambda_{G_1}(K)$ appropriately as follows. For~$G_1\in\Nb$ with~$\distance(G_1,K)=\mu$, we have that the correction operator~$\Corrector^{[\ell]}_K$ has already decayed with a remainder of order~$\mathcal{O}{(\exp^{-C\mu})}$. This allows us to define the localized element-wise enriched correction on a patch of size~$\ell-\mu$ around~$G_1$, as the total decay would add up to~$\mathcal{O}(\exp^{-C(\ell-\mu)}) \cdot \mathcal{O}(\exp^{-C\mu}) = \mathcal{O}(\exp^{-C\ell})$. We note that this idea works similarly for higher exponents of~$\addCorrector^\mathrm{loc}$ and~$\addCorrector$. 

    This is also the reason why we compute the basis functions simply on the patch as in~\eqref{eq:loc_enriched_basis}. Since all localized element-wise enriched correction operators are defined inside the same patch~$\Nb$, increasing the computational domain of them to exactly the patch does not introduce additional errors as the decay result still holds.
\end{remark}}%

In order to be able to apply the decay results of the element-wise enriched correction operators (cf.~\Cref{lem:DG_decay} and~\ref{lem:DG_decay_error}), we need the following auxiliary result {for corrections of arbitrary functions~$v\in\Hloc$}. 
\begin{lemma}\label{lem:DG_decay_local}
    Let~$G\in\mathcal{T}_H$ and~$\lambda_G\in\N$. Then, for any function~$v\in\Hloc$ and~$w\in W$ we have 
    \begin{equation*}
        \HIP{(\addCorrector_G^{[\lambda_G]} - \addCorrector_G)v}{w} \lesssim H^2\exp^{-C\lambda_G}\Norm{\nabla v}[L^2(G)] \Norm{\nabla w}[L^2(\mathtt{N}^{\lambda_G+1}(G))]. 
    \end{equation*}
\end{lemma}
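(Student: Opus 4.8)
The plan is to exploit a Galerkin-type orthogonality of the difference $z\coloneqq(\addCorrector_G^{[\lambda_G]}-\addCorrector_G)v\in W$ and then localize the test function. By the defining relation~\eqref{eq:loc_enriched_corrector_def} (used with finite~$\lambda_G$ and, since~$\mathtt{N}^{\lambda_G}(G)=\Omega$ there, also for the global operator~$\addCorrector_G$), and since~$\addCorrector_G^{[\lambda_G]}v$ is supported in~$\mathtt{N}^{\lambda_G}(G)$, both~$\HIP{\addCorrector_G^{[\lambda_G]}v}{\tilde w}$ and~$\HIP{\addCorrector_G v}{\tilde w}$ equal~$-\IP{v}{\tilde w}[L^2(G)]$ for every~$\tilde w\in W(\mathtt{N}^{\lambda_G}(G))$. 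Hence~$\HIP{z}{\tilde w}=0$ for all such~$\tilde w$, so~$\HIP{z}{w}=\HIP{z}{w-\tilde w}$ and I am free to subtract a localized approximation of~$w$.

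First I would establish the two powers of~$H$. Testing~\eqref{eq:loc_enriched_corrector_def} (with~$\lambda_G=\infty$) by~$w=\addCorrector_G v$ and using~$\addCorrector_G v\in W=\kernel\LLProjection$ gives~$\IP{v}{\addCorrector_G v}[L^2(G)]=\IP{(1-\LLProjection)v}{\addCorrector_G v}[L^2(G)]$. Together with coercivity of~$\HIP{\cdot}{\cdot}$ and~\eqref{eq:Pi_estimate} in its seminorm form (with~$\kappa=1$, applied on~$G$ to both~$v$ and~$\addCorrector_G v$, the latter lying in~$\kernel\LLProjection$) this yields~$\Norm{\nabla\addCorrector_G v}\lesssim H^2\Norm{\nabla v}[L^2(G)]$; the identical computation gives the same bound for~$\addCorrector_G^{[\lambda_G]}v$.

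For the localization I would take~$\tilde w=(1-\xInterpolation)((1-\eta)w)$, with~$\eta$ a cutoff satisfying~$\eta\equiv0$ on~$\mathtt{N}^{\lambda_G-2}(G)$, $\eta\equiv1$ outside~$\mathtt{N}^{\lambda_G-1}(G)$, and~$\Norm{\nabla\eta}[{L^\infty(\Omega)}]\lesssim H^{-1}$. Since~$1-\xInterpolation$ maps into~$W$, enlarges supports by at most one element layer, and fixes~$w\in W$ (so~$w=(1-\xInterpolation)w$), this gives~$\tilde w\in W(\mathtt{N}^{\lambda_G}(G))$ and~$w-\tilde w=(1-\xInterpolation)(\eta w)$, which vanishes on~$G$ and is supported in~$\Omega\setminus\mathtt{N}^{\lambda_G-3}(G)$. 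Because~$\nabla\addCorrector_G^{[\lambda_G]}v$ is supported in~$\mathtt{N}^{\lambda_G}(G)$, the orthogonality reduces the estimate to the band~$R\coloneqq\mathtt{N}^{\lambda_G}(G)\setminus\mathtt{N}^{\lambda_G-3}(G)$,
\begin{equation*}
  \HIP{z}{w}=\HIP{\addCorrector_G^{[\lambda_G]}v}{w-\tilde w}=\IP{A\nabla\addCorrector_G^{[\lambda_G]}v}{\nabla(w-\tilde w)}[L^2(R)].
\end{equation*}
On~$R$ the localized corrector is exponentially small: splitting~$\addCorrector_G^{[\lambda_G]}v=\addCorrector_G v+(\addCorrector_G^{[\lambda_G]}-\addCorrector_G)v$ and invoking Lemma~\ref{lem:DG_decay} on~$\Omega\setminus\mathtt{N}^{\lambda_G-3}(G)$ and Lemma~\ref{lem:DG_decay_error} gives~$\Norm{\nabla\addCorrector_G^{[\lambda_G]}v}[L^2(R)]\lesssim\exp^{-C\lambda_G}\Norm{\nabla\addCorrector_G v}$. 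Finally, local stability of~$\xInterpolation$ and the product rule bound~$\Norm{\nabla(w-\tilde w)}[L^2(R)]\lesssim\Norm{\nabla w}[L^2(\mathtt{N}^{\lambda_G+1}(G))]+H^{-1}\Norm{w}[L^2(R)]$, and the last term is absorbed into the first since~$w\in\kernel\LLProjection$ obeys~$H^{-1}\Norm{w}[L^2(K)]\lesssim\Norm{\nabla w}[L^2(K)]$ elementwise. Inserting~$\Norm{\nabla\addCorrector_G v}\lesssim H^2\Norm{\nabla v}[L^2(G)]$ then gives the claim.

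The hard part will be the layer bookkeeping: the transition of~$\eta$ must sit far enough from~$G$ that~$w-\tilde w$ vanishes on~$G$ and that~$R$ lies at distance~$\sim\lambda_G$ (so the decay lemmas deliver the full exponent), while~$\tilde w$ must still fit inside~$\mathtt{N}^{\lambda_G}(G)$ after the support-enlarging~$1-\xInterpolation$; the single layer lost there is exactly what produces the~$\mathtt{N}^{\lambda_G+1}(G)$ in the statement. For the finitely many small~$\lambda_G$ where this construction needs too many layers, the factor~$\exp^{-C\lambda_G}$ is~$\sim1$ and the bound follows directly from~$\HIP{z}{w}=\HIP{\addCorrector_G^{[\lambda_G]}v}{w}+\IP{v}{w}[L^2(G)]$ together with the energy and~$L^2$-projection estimates already established.
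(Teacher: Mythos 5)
Your proof is correct, but it is the dual of the paper's argument rather than a reproduction of it. The shared skeleton: both re-derive the energy bound $\Norm{\nabla\addCorrector_G v}\lesssim H^2\Norm{\nabla v}[L^2(G)]$ (your computation is exactly the paper's~\eqref{eq:DG_estimate}), both insert a cutoff function and a kernel-preserving operator to return to~$W$, and both let the decay lemmas produce the exponential factor. The difference is which half of the test function gets annihilated. The paper places the cutoff \emph{outside} the patch ($\eta\equiv 0$ on~$\mathtt{N}^{\lambda_G}(G)$, $\eta\equiv 1$ off~$\mathtt{N}^{\lambda_G+1}(G)$), uses the element-local bubble operator~$1-\bubbleOperator$, and proves the annihilation identity~\eqref{eq:DGloc_DG_eta} for the far field; hence $\HIP{z}{w}$ with $z=(\addCorrector_G^{[\lambda_G]}-\addCorrector_G)v$ reduces to the near field, is estimated by a global Cauchy--Schwarz, and the exponential comes from Lemma~\ref{lem:DG_decay_error} applied to~$\Norm{\nabla z}$ alone. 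You place the cutoff \emph{inside} the patch, use~$1-\xInterpolation$ (which costs one layer of support), and annihilate the \emph{near} field through the Galerkin orthogonality $\HIP{z}{\tilde w}=0$ for $\tilde w\in W(\mathtt{N}^{\lambda_G}(G))$; what survives is the localized corrector tested against the far field on the band~$R$, so you need both Lemma~\ref{lem:DG_decay} (for~$\addCorrector_G v$ on~$R$, applied with parameter~$\lambda_G-3$) and Lemma~\ref{lem:DG_decay_error}. The trade-offs are real but minor: the paper's variant is uniform in~$\lambda_G$ (its cutoff never eats into the patch) and needs only one decay lemma and no case distinction, whereas your construction consumes three interior layers and therefore requires the separate, correctly supplied, argument for small~$\lambda_G$; conversely, your route is the generic Galerkin-orthogonality localization argument of LOD theory, avoids having to justify the identity~\eqref{eq:DGloc_DG_eta} (which rests on the element-locality of~$\bubbleOperator$ and the support of~$\addCorrector_G^{[\lambda_G]}v$), and would carry over verbatim to any projection with kernel~$W$ that is locally $H^1$-stable and enlarges supports by a bounded number of layers. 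Both yield the stated estimate with the same structure of constants.
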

\begin{proof}
    First, define a cut-off function~$\eta$ by 
    \begin{equation}\label{eq:cutoff_Glambda}
    \begin{aligned}
        \eta &\equiv 0,\qquad&&\mathrm{in~}\mathtt{N}^{\lambda_G}(G),\\
        \eta &\equiv 1,\qquad&&\mathrm{in~}\Omega\setminus\mathtt{N}^{\lambda_G+1}(G),\\
        0\leq\eta&\leq 1,\quad\Norm{\nabla\eta}[L^\infty(R)] \lesssim H^{-1},\qquad&&\mathrm{in~}R=\mathtt{N}^{\lambda_G+1}(G)\setminus\mathtt{N}^{\lambda_G}(G).
    \end{aligned}
    \end{equation}
    Recall the definition of the bubble operator~$\bubbleOperator$ from~\Cref{subsec:corrections}. Employing the cut-off function~\eqref{eq:cutoff_Glambda} we have for any~$w\in W$ the identity
    \begin{equation}\label{eq:DGloc_DG_eta}
        \HIP{(\addCorrector_G^{[\lambda_G]} - \addCorrector_G)v}{(1-\mathcal{B}_H)(\eta w)} = -\HIP{\addCorrector_Gv}{(1-\mathcal{B}_H)(\eta w)} = 0.
    \end{equation}
    Using~$\bubbleOperator w=0$ for~$w\in W$ and~\eqref{eq:DGloc_DG_eta} in the equality, the stability estimates~\eqref{eq:stability_H1_bubble_op} and~\eqref{eq:cutoff_Glambda} similar to~\cite[eq.~(A.2)]{DonHM23} in the first inequality, and~\Cref{lem:DG_decay_error} in the second inequality, we obtain 
    \begin{equation}\label{eq:DGlocminDG_estimate}
        \begin{aligned}
        \HIP{(\addCorrector_G^{[\lambda_G]} - \addCorrector_G)v}{w} &= \HIP{(\addCorrector_G^{[\lambda_G]} - \addCorrector_G)v}{(1-\mathcal{B}_H)((1-\eta)w)}\\
        &\lesssim \Norm{\nabla (\addCorrector_G^{[\lambda_G]} - \addCorrector_G)v} \Norm{\nabla w}[L^2(\mathtt{N}^{\lambda_G+1}(G))]\\
        &\lesssim \exp^{-C\lambda_G}\Norm{\nabla \addCorrector_Gv}\Norm{\nabla w}[L^2(\mathtt{N}^{\lambda_G+1}(G))]. 
        \end{aligned}
    \end{equation}
    Furthermore, we have 
    \begin{equation}\label{eq:DG_estimate}
        \Norm{\nabla \addCorrector_G v}^2 \lesssim \HIP{\addCorrector_G v}{\addCorrector_G v} = -\IP{v}{\addCorrector_G v}[L^2(G)] \lesssim H^2\Norm{\nabla v}[L^2(G)]\Norm{\nabla \addCorrector_G v}. 
    \end{equation}
    Combining~\eqref{eq:DG_estimate} with~\eqref{eq:DGlocminDG_estimate} leads to 
    \begin{equation*}
        \HIP{(\addCorrector_G^{[\lambda_G]} - \addCorrector_G)v}{w} \lesssim H^2 \exp^{-C\lambda_G}\Norm{\nabla v}[L^2(G)] \Norm{\nabla w}[L^2(\mathtt{N}^{\lambda_G+1}(G))]. 
    \end{equation*}
\end{proof}

For a clearer presentation in the following proof we simplify localization parameters and patches, when it is clear how they are defined. We abbreviate~$\lambda_i \coloneqq \lambda_{G_{i}}(G_{i-1})$ for any~$i=1,\dots,\nu$, where we formally set~$G_0\coloneqq K$, and further~$\lambda_0 \coloneqq \ell$. Next, we also abbreviate the patches~$\omega_i\coloneqq \Nb[G_{i-1}][\lambda_{i-1}]$, and~$\omega_i^{+1}\coloneqq \Nb[G_{i-1}][\lambda_{i-1}+1]$. 

\begin{proof}[Proof of~\Cref{thm:localization}]
    {In the first part we adapt the representation of the localization error into a form such that \Cref{lem:DG_decay}, \Cref{lem:DG_decay_error}, and \Cref{lem:DG_decay_local} can be applied. }%
    Recall the definition of the enriched correction operator~\eqref{eq:tildeC_def}, its localized counterpart~\eqref{eq:loc_enriched_corrector_def}, and the (localized) correction operator~\eqref{eq:CK_definition}. We have 
    \begin{multline}\label{eq:localization_error_split}
        \Norm{\nabla ((\addCorrector^\mathrm{loc})^\nu - \addCorrector^\nu)(1-\Corrector^{[\ell]})v}\\
        \leq \Norm{\nabla (\addCorrector^\mathrm{loc} - \addCorrector)(\addCorrector^\mathrm{loc})^{\nu-1}(1-\Corrector^{[\ell]})v} + \Norm{\nabla \addCorrector((\addCorrector^\mathrm{loc})^{\nu-1} - \addCorrector^{\nu-1})(1-\Corrector^{[\ell]})v} . 
    \end{multline}
    The first term can be estimated using the localization estimate~\Cref{lem:CK_decay_error}. Next, we estimate the second term. For any~$w\in W$ we have using~\eqref{eq:Pi_estimate} 
    \begin{multline*}\label{eq:localization_error_recursion_initial}
        \HIP{\addCorrector((\addCorrector^\mathrm{loc})^{\nu-1} - \addCorrector^{\nu-1})(1-\Corrector^{[\ell]})v}{w} = -\IP{((\addCorrector^\mathrm{loc})^{\nu-1} - \addCorrector^{\nu-1})(1-\Corrector^{[\ell]})v}{w}[\LL][\big]\\
        \begin{aligned}
          \lesssim H^2\Norm{\nabla ((\addCorrector^\mathrm{loc})^{\nu-1} - \addCorrector^{\nu-1})(1-\Corrector^{[\ell]})v}\,\Norm{\nabla w}.
        \end{aligned}
    \end{multline*}
    Choosing~$w=\addCorrector((\addCorrector^\mathrm{loc})^{\nu-1} - \addCorrector^{\nu-1})(1-\Corrector^{[\ell]})v$, using the estimate~$\Norm{\nabla w}^2\lesssim \HIP{w}{w}$, and dividing by~$\Norm{\nabla w}$ we get 
    \begin{equation}\label{eq:localization_error_recursion_initial_estimate}
        \Norm{\nabla \addCorrector((\addCorrector^\mathrm{loc})^{\nu-1} - \addCorrector^{\nu-1})(1-\Corrector^{[\ell]})v} \lesssim H^2\Norm{\nabla ((\addCorrector^\mathrm{loc})^{\nu-1} - \addCorrector^{\nu-1})(1-\Corrector^{[\ell]})v}. 
    \end{equation}
    The norm on the right-hand side can now be split analogously to~\eqref{eq:localization_error_split} and with a similar estimate as in~\eqref{eq:localization_error_recursion_initial_estimate} we have a recursion through to the last step 
    \begin{equation}\label{eq:localization_error_recursion_final_estimate}
        \begin{aligned}
            \Norm{\nabla \addCorrector(\addCorrector^\mathrm{loc} - \addCorrector)(1-\Corrector^{[\ell]})v} &\lesssim H^2\Norm{\nabla (\addCorrector^\mathrm{loc} - \addCorrector)(1-\Corrector^{[\ell]})v}. 
        \end{aligned}
    \end{equation}
    Employing the recursive argument and the final estimate~\eqref{eq:localization_error_recursion_final_estimate} in~\cref{eq:localization_error_split} we get  
    \begin{equation}\label{eq:localization_error_intermediate}
        \Norm{\nabla ((\addCorrector^\mathrm{loc})^\nu - \addCorrector^\nu)(1-\Corrector^{[\ell]})v}
        \lesssim \sum_{i = 1}^{\nu} H^{2i-2} \Norm{\nabla (\addCorrector^\mathrm{loc} - \addCorrector)(\addCorrector^\mathrm{loc})^{\nu-i}(1-\Corrector^{[\ell]})v}. 
    \end{equation}
    Thus, we have to bound terms of the form~$\Norm{\nabla (\addCorrector^\mathrm{loc} - \addCorrector)(\addCorrector^\mathrm{loc})^{i}(1-\Corrector^{[\ell]})v}$ (after performing an index shift) for any~$i=0,\dots,\nu-1$. %
    {For these terms, we are now able to use the previous auxiliary results \Cref{lem:DG_decay}, \Cref{lem:DG_decay_error}, and \Cref{lem:DG_decay_local}. }%
    We use the notation from the start of the section and apply the definition of the (localized) element-wise (enriched) correction operators to obtain with~$w = (\addCorrector^\mathrm{loc} - \addCorrector)(\addCorrector^\mathrm{loc})^{i}(1-\Corrector^{[\ell]})v$ the following sum 
    \begin{multline*}
        \Norm{\nabla (\addCorrector^\mathrm{loc} - \addCorrector)(\addCorrector^\mathrm{loc})^{i}(1-\Corrector^{[\ell]})v}^2\lesssim \HIP{(\addCorrector^\mathrm{loc} - \addCorrector)(\addCorrector^\mathrm{loc})^{i}(1-\Corrector^{[\ell]})v}{w}\\
        \begin{aligned}
          &= \sum_{K\in\mathcal{T}_H}\sum_{G_1\in\omega_1}\cdots\sum_{G_{i+1}\in\omega_{i+1}} \HIP{(\addCorrector_{G_{i+1}}^{[\lambda_{i+1}]}-\addCorrector_{G_{i+1}})\addCorrector_{G_{i}}^{[\lambda_{i}]}\cdots\addCorrector_{G_{1}}^{[\lambda_1]}(v\vert_K-\Corrector_K^{[\ell]} v)}{w}[\big]. 
        \end{aligned}
    \end{multline*}
    {Here, we make use of the definition of the localization parameter~$\lambda_i$ for all~$i$. In every step, the localization error of the $i$th (localized) enriched correction operator scales like~$\mathcal{O}(\operatorname{exp}(-C\lambda_i))$ and is then multiplied by the localization error of the $(i-1)$th enriched correction operator. The combined localization error is then recursively multiplied such that by choice of the~$\lambda_i$ the overall localization error is of order~$\mathcal{O}(\operatorname{exp}(-C\ell))$. There, we use the fact that we have only a finite overlap of patches and that the norms of the enriched corrections scale with~$H^2$. }%
    In this sum we make use of the definitions of each~$\lambda_i$ in~\Cref{thm:localization} and~\Cref{lem:DG_decay_error} to bound the localization error for every term in the sum by an exponential term. 
    With~$w = (\addCorrector^\mathrm{loc} - \addCorrector)(\addCorrector^\mathrm{loc})^{i}(1-\Corrector^{[\ell]})v$, by~\Cref{lem:DG_decay_local} in the first estimate and applying the discrete Cauchy-Schwarz inequality to the innermost sum in the second estimate, we obtain 
    \begin{multline}\label{eq:DlocminD_Dlocnu_sum_estimate_initial}
        \Norm{\nabla (\addCorrector^\mathrm{loc} - \addCorrector)(\addCorrector^\mathrm{loc})^{i}(1-\Corrector^{[\ell]})v}^2\\
        \begin{alignedat}{3}
            &\lesssim \!\!\sum_{K\in\mathcal{T}_H}\sum_{G_1\in\omega_1} \!\!\cdots\hspace{-.45cm}\sum_{G_{i+1}\in\omega_{i+1}}\hspace{-.4cm} \mathrlap{H^2 \exp^{-C\lambda_{i+1}}\Norm{\nabla \addCorrector_{G_{i}}^{[\lambda_{i}]}\cdots\addCorrector_{G_{1}}^{[\lambda_1]}(v\vert_K-\Corrector_K^{[\ell]} v)}[L^2(G_{i+1})] \Norm{\nabla w}[L^2(\omega_{i+2}^{+1})]} && \\
            &\lesssim\! H^2 \!\!\sum_{K\in\mathcal{T}_H}\sum_{G_1\in\omega_1}\!\! \cdots \!\!\sum_{G_{i}\in\omega_{i}} && \Big(\sum_{G_{i+1}\in\omega_{i+1}} \hspace{-.35cm}\exp^{-2C\lambda_{i+1}}\Norm{\nabla \addCorrector_{G_{i}}^{[\lambda_{i}]}\cdots\addCorrector_{G_{1}}^{[\lambda_1]}(v\vert_K-\Corrector_K^{[\ell]} v)}[L^2(G_{i+1})]^2 \Big)^{\frac12} \\
            & && \cdot \Big(\sum_{G_{i+1}\in\omega_{i+1}}\Norm{\nabla w}[L^2(\omega_{i+2}^{+1})]^2 \Big)^{\frac12}. &&
        \end{alignedat}
    \end{multline}
    We fix~$G_{i}\in\omega_{i}$, such that~$\lambda_{i+1}$ and~$\omega_{i+1}$ are properly defined. Then define the rings~$R^\mu$ around the element~$G_{i}$ by $R^\mu = \{G\in\mathcal{T}_H \mid \distance(G,G_{i}) = \mu\}$. In the following estimate we employ the definitions of~$\omega_{i+1}$, $R^\mu$, and~$\lambda_{i+1}$ and the Cauchy-Schwarz and Young inequalities to the first factor inside the sum of~\eqref{eq:DlocminD_Dlocnu_sum_estimate_initial} to obtain 
    \begin{subequations}\label{eq:DlocminD_Dlocnu_sum_discreteCS}
    \begin{multline}
        \sum_{G_{i+1}\in\omega_{i+1}} \exp^{-2C\lambda_{i+1}}\Norm{\nabla \addCorrector_{G_{i}}^{[\lambda_{i}]}\cdots\addCorrector_{G_{1}}^{[\lambda_1]}(v\vert_K-\Corrector_K^{[\ell]} v)}[L^2(G_{i+1})]^2\\
        \begin{alignedat}{2}
            &\lesssim \sum_{\mu=0}^{\lambda_{i}} \exp^{-2C(\lambda_{i} - \mu)} \Big[&&\Norm{\nabla \addCorrector_{G_{i}} \addCorrector_{G_{i-1}}^{[\lambda_{i-1}]}\cdots\addCorrector_{G_{1}}^{[\lambda_1]}(v\vert_K-\Corrector_K^{[\ell]} v)}[L^2(R^\mu)]^2\\
            & && + \Norm{\nabla (\addCorrector_{G_{i}}^{[\lambda_{i}]} - \addCorrector_{G_{i}}) \addCorrector_{G_{i-1}}^{[\lambda_{i-1}]} \cdots \addCorrector_{G_{1}}^{[\lambda_1]}(v\vert_K-\Corrector_K^{[\ell]} v)}[L^2(R^\mu)]^2\Big]\eqqcolon L_1.
        \end{alignedat}
    \end{multline}
    Next, we apply~\Cref{lem:DG_decay} and~\ref{lem:DG_decay_error} and obtain 
    {\begin{equation}
        \begin{alignedat}{2}
            L_1&\lesssim \sum_{\mu=0}^{\lambda_{i}} \exp^{-2C(\lambda_{i} - \mu)} \Big[&&\exp^{-2C\mu} \Norm{\nabla \addCorrector_{G_{i}} \addCorrector_{G_{i-1}}^{[\lambda_{i-1}]}\cdots\addCorrector_{G_{1}}^{[\lambda_1]}(v\vert_K-\Corrector_K^{[\ell]} v)}^2\\
            & && + \exp^{-2C\lambda_{i}} \Norm{\nabla \addCorrector_{G_{i}} \addCorrector_{G_{i-1}}^{[\lambda_{i-1}]} \cdots \addCorrector_{G_{1}}^{[\lambda_1]}(v\vert_K-\Corrector_K^{[\ell]} v)}^2\Big]\eqqcolon L_2.
        \end{alignedat}
    \end{equation}}
    Finally, we can cancel the exponential terms and sum them up in the first estimate, and then use the definition of the element-wise enriched correction (see~\eqref{eq:DG_estimate}) in the second estimate to obtain 
    \begin{equation}
        \begin{alignedat}{1}
            L_2&\lesssim \lambda_{i} \exp^{-2C\lambda_{i}} \Norm{\nabla \addCorrector_{G_{i}} \addCorrector_{G_{i-1}}^{[\lambda_{i-1}]} \cdots \addCorrector_{G_{1}}^{[\lambda_1]}(v\vert_K-\Corrector_K^{[\ell]} v)}^2 \\
            &\lesssim H^4\lambda_{i} \exp^{-2C\lambda_{i}} \Norm{\nabla \addCorrector_{G_{i-1}}^{[\lambda_{i-1}]} \cdots \addCorrector_{G_{1}}^{[\lambda_1]}(v\vert_K-\Corrector_K^{[\ell]} v)}[L^2(G_{i})]^2. 
        \end{alignedat}
    \end{equation}
    \end{subequations}
    Using~\eqref{eq:DlocminD_Dlocnu_sum_discreteCS} in~\eqref{eq:DlocminD_Dlocnu_sum_estimate_initial}, the finite overlap of patches~$\Nb[G_{i+1}][\lambda_{i+1} + 1]\subset \Nb[G_{i}][\lambda_{i} + 1]$ and the fact, that~$\lambda_i\leq \ell$ for~$i=0,\dots,\nu$ we obtain with~$w = (\addCorrector^\mathrm{loc} - \addCorrector)(\addCorrector^\mathrm{loc})^{i}(1-\Corrector^{[\ell]})v$ 
    \begin{multline}\label{eq:DlocminD_Dlocnu_sum_estimate_first}
        \Norm{\nabla (\addCorrector^\mathrm{loc} - \addCorrector)(\addCorrector^\mathrm{loc})^{i}(1-\Corrector^{[\ell]})v}^2\\
        \begin{alignedat}{2}
            &\lesssim H^2\ell^{\frac{d+1}{2}} \hspace{-.2cm} \sum_{K\in\mathcal{T}_H}\sum_{G_1\in\omega_1}\hspace{-.1cm}\cdots\hspace{-.1cm}\sum_{G_{i}\in\omega_{i}} && H^2\exp^{-C\lambda_{i}} \Norm{\nabla \addCorrector_{G_{i-1}}^{[\lambda_{i-1}]}\cdots\addCorrector_{G_{1}}^{[\lambda_1]}(v\vert_K-\Corrector_K^{[\ell]} v)}[L^2(G_{i})] \\
            & &&\cdot \Norm{\nabla w}[L^2(\omega_{i+1}^{+1})].
        \end{alignedat}
    \end{multline}
    We have that the right-hand side of~\eqref{eq:DlocminD_Dlocnu_sum_estimate_first} has the same structure (up to a factor) as~\eqref{eq:DlocminD_Dlocnu_sum_estimate_initial} with one sum less. Thus, we can recursively apply analogous ideas to~\eqref{eq:DlocminD_Dlocnu_sum_discreteCS} and~\eqref{eq:DlocminD_Dlocnu_sum_estimate_first} in the first estimate and the discrete Cauchy-Schwarz inequality to finally get 
    \begin{multline}\label{eq:DlocminD_Dlocnu_sum_estimate_final}
        \Norm{\nabla (\addCorrector^\mathrm{loc} - \addCorrector)(\addCorrector^\mathrm{loc})^{i}(1-\Corrector^{[\ell]})v}^2\\
        \begin{alignedat}{1}
            &\mathrlap{\lesssim H^{2i}\ell^{\frac{d+1}{2}i} \sum_{K\in\mathcal{T}_H} \sum_{G_1\in\omega_1} H^2\exp^{-C\lambda_1} \Norm{\nabla (v\vert_K-\Corrector_K^{[\ell]} v)}[L^2(G_{1})] \Norm{\nabla w}[L^2(\omega_2^{+1})]} \\
            &\lesssim\! H^{2({i+1})}\ell^{\frac{d+1}{2}i} \!\sum_{K\in\mathcal{T}_H} \!\Big(\!\sum_{G_1\in\omega_1}\! \exp^{-2C\lambda_1} \Norm{\nabla (v\vert_K-\Corrector_K^{[\ell]} v)}[L^2(G_{1})]^2\Big)^{\frac12} \Big(\sum_{G_1\in\omega_1} \Norm{\nabla w}[L^2(\omega_2^{+1})]^2\Big)^{\frac12}.
        \end{alignedat}
    \end{multline}
    As above, we fix~$K\in\mathcal{T}_H$ and define the rings~$R^\mu$ with~$\mu$ layers around~$K$. Then we use the definition of the ring~$R^\mu$ and~$\lambda_1$ in the first estimate, the Cauchy-Schwarz and Young inequalities twice in the second estimate, \Cref{lem:CK_decay} and~\ref{lem:CK_decay_error} in the third estimate, summing up all terms in the fourth estimate, and using the definition of the element-wise correction~\eqref{eq:CK_definition} in the least estimate. Altogether, we obtain 
    \begin{multline}\label{eq:OneminCell_sum_discreteCS}
        \sum_{G_1\in\Nb} \exp^{-2C\lambda_1}\Norm{\nabla (v\vert_K-\Corrector_K^{[\ell]} v)}[L^2(G_{1})]^2\lesssim \sum_{\mu=0}^{\ell} \mathrlap{\exp^{-2C(\ell -\mu)} \Norm{\nabla (v\vert_K-\Corrector_K^{[\ell]} v)}[L^2(R^\mu)]^2} \\
        \begin{alignedat}{1}
            &\lesssim {\exp^{-2C\ell} \Norm{\nabla v}[L^2(K)]^2} + \sum_{\mu=0}^\ell \exp^{-2C(\ell -\mu)} \Big[\Norm{\nabla (\Corrector_K v-\Corrector_K^{[\ell]} v)}[L^2(R^\mu)]^2 + \Norm{\nabla \Corrector_K v}[L^2(R^\mu)]^2\Big]\\
            &\lesssim {\exp^{-2C\ell} \Norm{\nabla v}[L^2(K)]^2} + \sum_{\mu=0}^\ell \exp^{-2C(\ell -\mu)} \Big[\exp^{-2C\ell} \Norm{\nabla (\Corrector_K v)}^2 + \exp^{-2C\mu}\Norm{\nabla \Corrector_K v}^2\Big]\\
            &\lesssim {\exp^{-2C\ell} \big[\Norm{\nabla v}[L^2(K)]^2 + \ell\Norm{\nabla \Corrector_Kv}^{{2}}\big]} \lesssim {\ell\exp^{-2C\ell} \Norm{\nabla v}[L^2(K)]^2.} 
        \end{alignedat}
    \end{multline}
    Employing the discrete Cauchy-Schwarz inequality twice in~\eqref{eq:DlocminD_Dlocnu_sum_estimate_final} and using~\eqref{eq:OneminCell_sum_discreteCS} we obtain with the finite overlap of patches (similar to above)
    \begin{multline*}
        \Norm{\nabla (\addCorrector^\mathrm{loc} - \addCorrector)(\addCorrector^\mathrm{loc})^{i}(1-\Corrector^{[\ell]})v}^2\\
        \begin{aligned}
            &\lesssim H^{2(i+1)}\ell^{\frac{d+1}{2}(i+1)} \!\sum_{K\in\mathcal{T}_H}\! \exp^{-C\ell} \Norm{\nabla v}[L^2(K)] \Norm{\nabla (\addCorrector^\mathrm{loc} - \addCorrector)(\addCorrector^\mathrm{loc})^{i}(1-\Corrector^{[\ell]})v}[L^2(\mathtt{N}^{\ell+1}(K))]\\
            &\lesssim H^{2(i+1)}\ell^{\frac{d+1}{2}(i+1)}\ell^{\frac{d}{2}} \exp^{-C\ell} \Norm{\nabla v} \Norm{\nabla (\addCorrector^\mathrm{loc} - \addCorrector)(\addCorrector^\mathrm{loc})^{i}(1-\Corrector^{[\ell]})v}. 
        \end{aligned}
    \end{multline*}
    Dividing by $\Norm{\nabla (\addCorrector^\mathrm{loc} - \addCorrector)(\addCorrector^\mathrm{loc})^{i}(1-\Corrector^{[\ell]})v}$, using that~$H^{2(i+1)}\ell^{\frac{d+1}{2}(i+1)}\ell^{\frac{d}{2}}\lesssim 1$ by assumption, and plugging it into~\eqref{eq:localization_error_intermediate} yields the assertion 
    \begin{equation*}
        \Norm{\nabla ((\addCorrector^\mathrm{loc})^\nu - \addCorrector^\nu)(1-\Corrector^{[\ell]})v}
        \lesssim \sum_{i = 1}^{\nu} H^{2i-2} \exp^{-C\ell} \Norm{\nabla v} \lesssim \exp^{-C\ell} \Norm{\nabla v}.
    \end{equation*}
\end{proof}

\subsection{Proof of the main theorem}\label{subsec:proof_main}
To simplify the presentation in the following proofs assume that~$k>1$, such that~$j>0$. For the case~$k=1$, see~\Cref{rem:classical_holod} at the end of this section. 

Let~$u$ denote the exact solution to~\eqref{eq:heat_model_problem}, and~$\check{u}_H$ the eho-LOD solution to~\eqref{eq:practical_method}. We define the error between the exact solution and its map into the multiscale space by~$\varphi(t)=u(\cdot, t)-(\tProjection u(\cdot, t) + \hProjection u(\cdot, t))$, and the localized version is denoted with~$\varphi^\mathrm{loc}(t)=u(\cdot, t)-(\tProjectionloc u(\cdot, t) + \hProjectionloc u(\cdot, t))$. We prove the theorem in three steps. First we estimate the full error~$e(t) = u(\cdot, t)-\check{u}_H(t)$ by the map into the enriched multiscale space~$\varphi$ and a term that can be estimated using the localization results from~\Cref{subsec:corrections}. The second step is the estimation of the localization error, and then the third step consists of bounding~$\varphi$. 

\begin{proof}[Proof of~\Cref{thm:semi_discrete}]
  \textbf{1.}~We start by bounding the error by the map into the enriched multiscale space~$\varphi$ and a localization error. First, we have by Galerkin orthogonality that the error $e$ solves the following equations, where the estimates follow from Cauchy-Schwarz and Young's inequality, where we omit the argument~$t\in [0,T]$ to improve readability, 
  \begin{equation}\label{eq:error_var_form}
      \begin{aligned}
          \IP{\diff t e}{e}+\HIP{e}{e} &= \IP{\diff t e}{\varphi^\mathrm{loc}} + \HIP{e}{\varphi^\mathrm{loc}}\\
          &\hspace{-1cm}\leq \tfrac{1}{2} (\Norm{\diff t e}^2 + \Norm{\varphi^\mathrm{loc}}^2 + \HIP{e}{e} + \HIP{{\varphi^\mathrm{loc}}}{{\varphi^\mathrm{loc}}}),\\
          \IP{\diff t e}{\diff t e}+\HIP{e}{\diff t e} &= \IP{\diff t e}{\diff t \varphi^\mathrm{loc}} + \HIP{e}{\diff t \varphi^\mathrm{loc}}\\
          &\hspace{-1cm}\leq \tfrac{1}{2} (\Norm{\diff t e}^2 + \Norm{\diff t \varphi^\mathrm{loc}}^2 + \HIP{e}{e} + \HIP{{\diff t \varphi^\mathrm{loc}}}{{\diff t \varphi^\mathrm{loc}}}), 
      \end{aligned}
  \end{equation}
  If we sum up the inequalities~\eqref{eq:error_var_form} and integrate over time from~$0$ to~$t$, we have 
  \begin{multline*}
      \int\limits_0^t \tfrac{1}{2}\diff t \Norm{e(s)}^2 + \HIP{e(s)}{e(s)} + \Norm{\diff t e(s)} + \tfrac{1}{2}\diff t \HIP{e(s)}{e(s)} \d s\\[-0.3\baselineskip]
      \begin{aligned}
          &\leq \int\limits_0^t \Norm{\diff t e(s)}^2 + \tfrac{1}{2}(\Norm{\varphi^\mathrm{loc}(s)}^2 + \Norm{\diff t \varphi^\mathrm{loc}(s)}^2) \d s\\[-0.3\baselineskip]
          &\quad +\int\limits_0^t \HIP{e(s)}{e(s)} + \tfrac{1}{2}(\HIP{\varphi^\mathrm{loc}(s)}{\varphi^\mathrm{loc}(s)} + \HIP{\diff t \varphi^\mathrm{loc}(s)}{\diff t \varphi^\mathrm{loc}(s)}) \d s. 
      \end{aligned}
  \end{multline*}
  Re-arranging terms on both sides yields  
  \begin{multline}\label{eq:error_estimate_philoc}
    \Norm{e(t)}^2 - \Norm{e(0)}^2 + \HIP{e(t)}{e(t)} - \HIP{e(0)}{e(0)}\\
    \leq \!\!\int\limits_0^t\! \Norm{\varphi^\mathrm{loc}(s)}^2 + \Norm{\diff t \varphi^\mathrm{loc}(s)}^2 + \HIP{\varphi^\mathrm{loc}(s)}{\varphi^\mathrm{loc}(s)} + \HIP{\diff t \varphi^\mathrm{loc}(s)}{\diff t \varphi^\mathrm{loc}(s)} \d s. 
  \end{multline}
  By the definition of the initial condition~\eqref{eq:sd_init_condition_practical} we have~$e(0)=\varphi^\mathrm{loc}(0)$. In the following we omit the argument~$s\in [0,T]$ to improve readability, and note that the following equations hold for all~$s$. Since~$\varphi^\mathrm{loc}\in W$ we have 
  \begin{equation}\label{eq:philoc_gradient_estimate}
    \Norm{\varphi^\mathrm{loc}} \lesssim H\Norm{\nabla \varphi^\mathrm{loc}}, 
  \end{equation}
  which holds similarly for~$\diff t \varphi^\mathrm{loc}$. Thus, we bound terms of the form 
  \begin{equation}\label{eq:philoc_estimate}
      \Norm{\nabla \varphi^\mathrm{loc}} \leq \Norm{\nabla \varphi} + \Norm{\nabla ((\tProjection + \hProjection)u-(\tProjectionloc + \hProjectionloc)u)}, 
  \end{equation}
  and estimates for~$\nabla\diff t \varphi^\mathrm{loc}$ follow analogously (we just need to keep in mind that more temporal regularity is required). The first term on the right-hand side of~\eqref{eq:philoc_estimate} will be estimated in the last step. 
  
  \textbf{2.}~For the second term on the right-hand side of~\eqref{eq:philoc_estimate} we use the Cauchy-Schwarz inequality to split the error into the localization error of the correction operator, the localization error of the enriched correction operator and a third error which is a defect resulting from the definition of the map into the global enriched correction space~\eqref{eq:add_proj_def}. This leads to 
  \begin{multline}\label{eq:cPloc_error}
      \Norm{\nabla ((\tProjection + \hProjection)u-(\tProjectionloc + \hProjectionloc)u)} \lesssim \Norm{\nabla (\Corrector - \Corrector^{[\ell]})u}\\
      + \Norm{\nabla \sum_{\nu=1}^{j}((\addCorrector^{\mathrm{loc}})^\nu - \addCorrector^\nu)(\diff t[\nu] \tProjectionloc u)}[\LL][\Big] + \Norm{\nabla \sum_{\nu=1}^{j}\addCorrector^\nu(\diff t[\nu] (\Corrector^{[\ell]} - \Corrector) u)}[\LL][\Big]. 
  \end{multline}
  The first term can directly be estimated employing~\Cref{lem:C_decay_error}, and the second term is bounded by~\Cref{thm:localization} using that~$\ell\sim C_p|\log H|$ and thus~$H^2\ell^{d+1}\lesssim 1$ holds. For the estimation of the third term we also want to use~\Cref{lem:C_decay_error} but need some more preliminary considerations. Employing the definition of~$\addCorrector$ we obtain for any~$w\in W$ 
  \begin{multline}\label{eq:Dnu_CminCell_first_step}
      \HIP{\sum_{\nu=1}^{j}(\addCorrector^\nu)(\diff t[\nu] (\Corrector^{[\ell]} - \Corrector) u)}{w}[\Big]\\[-2ex]
      \begin{aligned}
          &= -\IP{\diff t (\Corrector^{[\ell]} - \Corrector)u}{w} - \IP{\diff t \sum_{\nu=2}^{j}\addCorrector^{\nu-1}(\diff t[\nu-1] (\Corrector^{[\ell]} - \Corrector) u)}{w}[\LL][\Big]\\
          &\lesssim H^2\Norm{\nabla \diff t (\Corrector^{[\ell]} - \Corrector)u}\,\Norm{\nabla w}\\
          &\quad + H^2\Norm{\nabla \diff t \sum_{\nu=2}^{j}\addCorrector^{\nu-1}(\diff t[\nu-1] (\Corrector^{[\ell]} - \Corrector) u)}[\LL][\Big]\,\Norm{\nabla w}.
      \end{aligned}
  \end{multline}
  We choose~$w=\sum_{\nu=1}^{j}(\addCorrector^\nu)(\diff t[\nu] (\Corrector^{[\ell]} - \Corrector) u)$. Using that~$\Norm{\nabla w}^2\lesssim \HIP{w}{w}$, dividing by~$\Norm{\nabla w}$, and employing~\Cref{lem:C_decay_error} we obtain 
  \begin{multline}\label{eq:Dnu_CminCell_first_step_estimate}
  \Norm{\nabla \sum_{\nu=1}^{j}(\addCorrector^\nu)(\diff t[\nu] (\Corrector^{[\ell]} - \Corrector) u)}[\LL][\Big]\\
    \begin{aligned}
          &\lesssim H^2\exp^{-C\ell}\Norm{\nabla \diff t u} + H^2\Norm{\nabla \diff t \sum_{\nu=2}^{j}\addCorrector^{\nu-1}(\diff t[\nu-1] (\Corrector^{[\ell]} - \Corrector) u)}[\LL][\Big].
    \end{aligned}
  \end{multline}
  The first term on the right-hand side scales as desired rate with respect to~$\ell$, and the second term can be recursively estimated with similar arguments until the final estimate 
  \begin{equation}\label{eq:Dnu_CminCell_last_step_estimate}
      \Norm{\nabla \diff t[j-1] \sum_{\nu=j}^{j}(\addCorrector^{\nu-(j-1)})(\diff t[\nu-(j-1)] (\Corrector^{[\ell]} - \Corrector) u)}[\LL][\Big]\lesssim H^2\exp^{-C\ell}\Norm{\nabla {\diff t[j]} u}.
  \end{equation}
  Overall, employing~\eqref{eq:Dnu_CminCell_first_step_estimate} recursively in~\eqref{eq:Dnu_CminCell_first_step} with~\eqref{eq:Dnu_CminCell_last_step_estimate} we obtain 
  \begin{equation}\label{eq:Dnu_CminCell_complete_estimate}
      \Norm{\nabla \sum_{\nu=1}^{j}(\addCorrector^\nu)(\diff t[\nu] (\Corrector^{[\ell]} - \Corrector) u)}[\LL][\Big] \lesssim H^2\exp^{-C\ell} \sum_{\nu=1}^j H^{2(\nu-1)}\Norm{\nabla \diff t[\nu] u}. 
  \end{equation}
  The localization error~\eqref{eq:cPloc_error} can thus be bounded using~\Cref{lem:C_decay_error}, \Cref{thm:localization}, and~\eqref{eq:Dnu_CminCell_complete_estimate} by 
  \begin{equation}\label{eq:cPloc_estimate_complete}
      \Norm{\nabla ((\tProjection + \hProjection)u-(\tProjectionloc + \hProjectionloc)u)} \lesssim \exp^{-C\ell} C_\mathrm{data}. 
  \end{equation}

  \textbf{3.}~The next step is to estimate the terms~$\Norm{\nabla\varphi}$ from~\eqref{eq:philoc_estimate}. In this step we again omit the argument~$s\in[0,T]$ and note that the estimates holds for all~$s$. We have $\varphi\in W$, which yields using similar arguments as in Lemma~\ref{lem:error_addCorr} 
  \begin{equation}\label{eq:phi_estimate}
      \begin{aligned}
          \HIP{\varphi}{\varphi} &= \HIP{u}{\varphi} - \HIP{\hProjection u}{\varphi} = \IP{f}{\varphi} - \IP{\diff t u}{\varphi} - \HIP{\hProjection u}{\varphi}\\
          &\lesssim H^{p+2} \Norm{f}[{\Space{H}{\mathcal{T}_H}[k]}] \, \Norm{\nabla \varphi} - \IP{\diff t u}{\varphi} - \HIP{\hProjection u}{\varphi}.
      \end{aligned}
  \end{equation}
  Applying the estimate~$\Norm{\nabla \varphi}^2\lesssim \HIP{\varphi}{\varphi}$, the first term on the right-hands side yields the optimal rate. For the last two terms on the right-hand side of~\eqref{eq:phi_estimate} we have using Cauchy-Schwarz inequality and~$\varphi\in W$
  \begin{multline}\label{eq:dtu_hPu_estimate_initial}
      \IP{\diff t u}{\varphi} \! + \HIP{\hProjection u}{\varphi} = \IP{\diff t u}{\varphi} - \! \IP{\diff t \big(\tProjection u + \sum_{\nu=2}^{j}\addCorrector^{\nu-1}(\diff t[\nu-1] \tProjection u)\big)}{\varphi}[\LL][\Big]\\
        \begin{aligned}
          &\lesssim H^2\Norm{\nabla \diff t \Big(u-\tProjection u - \sum_{\nu=2}^{j}\addCorrector^{\nu-1}(\diff t[\nu-1] \tProjection u)\Big)}[\LL][\Big]\,\Norm{\nabla \varphi},
        \end{aligned}
  \end{multline}
  where the first term on the right-hand side (up to the factor~$H^2$) can be split into 
  \begin{multline}\label{eq:dtu_hPu_split}
      \Norm{\nabla \diff t (u-\tProjection u - \sum_{\nu=2}^{j}\addCorrector^{\nu-1}(\diff t[\nu-1] \tProjection u))}[\LL][\Big] \\
      \lesssim \Norm{\nabla \diff t (u-\tProjection u - \sum_{\nu=2}^{j+1}\addCorrector^{\nu-1}(\diff t[\nu-1] \tProjection u))}[\LL][\Big] + \Norm{\nabla \diff t \addCorrector^{j}(\diff t[j] \tProjection u)}. 
  \end{multline}
  We note here that we artificially expand the enriched multiscale space for the first term on the right-hand side to obtain a better convergence with respect to~$j$. Here we leverage that the enriched corrections, i.e., the second term on the right-hand side, have much better scaling than the enriched corrections from the left-hand side. This expansion of the space is not required in practice, and is only a theoretical device. This estimate, however, only works if there is sufficient temporal regularity of~$u$. 
  For the second term we have for any $w\in W$ 
  \begin{equation}\label{eq:DPu_first_step}
  \begin{aligned}
      \HIP{\diff t \addCorrector^{j}(\diff t[j] \tProjection u)}{w} &= \IP{\diff t[2] \addCorrector^{j-1}(\diff t[j-1] \tProjection u)}{w}\\
      &\lesssim H^2 \Norm{\nabla \diff t[2] \addCorrector^{j-1}(\diff t[j-1] \tProjection u)}\,\Norm{\nabla w}. 
  \end{aligned}
  \end{equation}
  With the choice~$w=\diff t \addCorrector^{j}(\diff t[j] \tProjection u)$ in~\eqref{eq:DPu_first_step} and~$\Norm{\nabla w}^2\lesssim \HIP{w}{w}$ we get  
  \begin{equation}\label{eq:DPu_first_step_estimate}
      \Norm{\nabla \diff t \addCorrector^{j}(\diff t[j] \tProjection u)} \lesssim H^2 \Norm{\nabla \diff t[2] \addCorrector^{j-1}(\diff t[j-1] \tProjection u)}. 
  \end{equation}
  The estimate in~\eqref{eq:DPu_first_step_estimate} can recursively be applied, which leads to 
  \begin{equation}\label{eq:DPu_final_estimate}
      \Norm{\nabla \diff t \addCorrector^{j}(\diff t[j]\tProjection u)}\lesssim H^{2j}\Norm{\nabla \diff t[j+1]\tProjection u}\lesssim H^{2j}\Norm{\nabla \diff t[j+1] u}. 
  \end{equation}
  As an intermediate step, starting from~\cref{eq:phi_estimate} using~$\Norm{\nabla \varphi}^2 \lesssim\HIP{\varphi}{\varphi}$, and equations~\eqref{eq:dtu_hPu_estimate_initial}, \eqref{eq:dtu_hPu_split}, and~\eqref{eq:DPu_final_estimate} we obtain 
  \begin{equation}\label{eq:phi_estimate_intermediate}
      \begin{aligned}
          \Norm{\nabla \varphi} &\lesssim H^{p+2}\Norm{f}[H^k(\mathcal{T}_H)] +H^{2j+2}\Norm{\nabla \diff t[j+1] u}\\
          &\quad + H^2 \Norm{\nabla \diff t (u-\tProjection u - \sum_{\nu=2}^{j+1}\addCorrector^{\nu-1}(\diff t[\nu-1] \tProjection u))}[\LL][\Big]. 
      \end{aligned}
  \end{equation}
  Finally, we bound the last term on the right-hand side of~\eqref{eq:phi_estimate_intermediate}. %
  {The idea here is based on the expansion given in~\eqref{eq:solution_approximation_expansion}. We can use the fact that $u$ is the solution to the parabolic equation and that $\addCorrector$ is defined to cancel time derivatives of $u$. Applying these properties iteratively and using projection properties of $\tProjection$ (in particular orthogonality), we in each step obtain a higher-order term based on regularity properties of $f$ and a remaining term that in each step is amplified by a factor $H^2$ until reaching the desired optimal convergence rate. More precisely,}  
  for any~$w\in W$ we have with ~\eqref{eq:Pi_estimate} using Cauchy-Schwarz inequality 
  \begin{multline*}
    \HIP{\diff t \big(u-\tProjection u - \sum_{\nu=2}^{j+1}\addCorrector^{\nu-1}(\diff t[\nu-1] \tProjection u)\big)}{w}[\Big] = \HIP{\diff t \big(u - \sum_{\nu=2}^{j+1}\addCorrector^{\nu-1}(\diff t[\nu-1] \tProjection u)\big)}{w}[\Big] \\
    \begin{aligned}
        &= \IP{\diff t f - \diff t[2] u}{w} + \IP{\diff t[2] (\tProjection u + \sum_{\nu=3}^{j+1}\addCorrector^{\nu-2}(\diff t[\nu-2] \tProjection u))}{w}[\LL][\Big]\\
        &\lesssim H^{p+2} \Norm{\diff t f}[H^k(\mathcal{T}_H)]\,\Norm{\nabla w}\\
        &\quad + H^2\Norm{\nabla \diff t [2] (u-\tProjection u - \sum_{\nu=3}^{j+1}\addCorrector^{\nu-2}(\diff t[\nu-2] \tProjection u))}[\LL][\Big]\, \Norm{\nabla w}. 
    \end{aligned}
  \end{multline*}
  Employing the choice~$w=\diff t (u-\tProjection u - \sum_{\nu=2}^{j+1}\addCorrector^{\nu-1}(\diff t[\nu-1] \tProjection u))$ and using~$\Norm{\nabla w}^2\lesssim \HIP{w}{w}$ yields 
  \begin{multline}\label{eq:dtu_hPu_first_step_estimate}
      \Norm{\nabla \diff t \big(u-\tProjection u - \sum_{\nu=2}^{j+1}\addCorrector^{\nu-1}(\diff t[\nu-1] \tProjection u)\big)}[\LL][\Big] \\
      \lesssim H^{p+2} \Norm{\diff t f}[H^k(\mathcal{T}_H)] + H^2\Norm{\nabla \diff t [2] (u-\tProjection u - \sum_{\nu=3}^{j+1}\addCorrector^{\nu-2}(\diff t[\nu-2] \tProjection u))}[\LL][\Big]
  \end{multline}
  The first term on the right-hand side has the optimal rate, and for the second term we can apply the argument in~\eqref{eq:dtu_hPu_first_step_estimate} recursively until the final estimate reads 
  \begin{multline*}
    \HIP{\diff t[j]\Big(u - \tProjection u - \sum_{\nu=j+1}^{j+1}\addCorrector^{\nu - j} (\diff t[\nu - j] \tProjection u)\Big)}{w}[\Big] = \IP{\diff t[j] f}{w} - \IP{\diff t[j+1](u-\tProjection u)}{w}\\
      \begin{aligned}
        &\lesssim H^{p+2} \Norm{\diff t[j] f}[H^k(\mathcal{T}_H)]\, \Norm{\nabla w} + H^2\Norm{\nabla \diff t[j+1] u}\, \Norm{\nabla w}. 
      \end{aligned}
  \end{multline*}
  If we choose~$w = \diff t[j] (u-\tProjection u - \addCorrector(\diff t \tProjection u))$, we obtain (with~$\Norm{\nabla w}^2\lesssim \HIP{w}{w}$)
  \begin{equation}\label{eq:dtu_hPu_final_step_estimate}
      \Norm{\nabla \diff t[j] (u - \tProjection u - \addCorrector(\diff t \tProjection u))} \lesssim H^{p+2} \Norm{\diff t[j] f}[H^k(\mathcal{T}_H)] + H^2\Norm{\nabla \diff t[j+1] u}
  \end{equation}
  Starting from estimate~\eqref{eq:dtu_hPu_first_step_estimate} and applying the recursion we obtain employing~\eqref{eq:dtu_hPu_final_step_estimate} 
  \begin{multline}\label{eq:dtu_hPu_estimate_complete}
      \Norm{\nabla \diff t (u-\tProjection u - \sum_{\nu=2}^{j+1}\addCorrector^{\nu-1}(\diff t[\nu-1] \tProjection u))}[\LL][\Big]\\
      \begin{aligned}
          &\lesssim H^{p+2}\sum_{\nu=2}^{j+1} H^{2(\nu-2)} \Norm{\diff t[\nu-1] f}[H^k(\mathcal{T}_H)] + H^{2j} \Norm{\nabla \diff t[j+1] u}. 
      \end{aligned}
  \end{multline}
  The estimate~\eqref{eq:dtu_hPu_estimate_complete} can now be used in~\eqref{eq:phi_estimate_intermediate} which yields 
  \begin{multline}\label{eq:phi_estimate_complete}
  \Norm{\nabla\varphi} \\
  \begin{aligned}
    &\lesssim H^{p+2}\Norm{f}[H^k(\mathcal{T}_H)] + H^{2j+2}\Norm{\nabla \diff t[j+1] u} + H^{p+4}\sum_{\nu=2}^{j+1} H^{2(\nu-2)} \Norm{\diff t[\nu-1] f}[H^k(\mathcal{T}_H)]\\
    &\lesssim (H^{p+2} + H^{2j+2})C_\mathrm{data}.
  \end{aligned}
  \end{multline}
  Finally, we are able to put all estimates together. Starting from~\eqref{eq:error_estimate_philoc}, using~\eqref{eq:philoc_gradient_estimate}, and~\eqref{eq:philoc_estimate} in the first estimate, we can apply the localization estimate~\eqref{eq:cPloc_estimate_complete} (where we used the specific choices for each~$\lambda_G$) and the mapping estimate~\eqref{eq:phi_estimate_complete} in the second estimate to obtain
  \begin{equation*}
      \begin{alignedat}{2}
          \Norm{e(t)} + \Norm{\nabla e(t)} &\lesssim_{T,p} && \sup_{s\in[0,T]} \Big[ \Norm{\nabla \varphi(s)} + \Norm{\nabla \diff t \varphi(s)}\\
          & && + \Norm{\nabla ((\tProjection + \hProjection)u(s)-(\tProjectionloc + \hProjectionloc)u(s))}\\
          & && + \Norm{\nabla \diff t ((\tProjection + \hProjection)u(s)-(\tProjectionloc + \hProjectionloc)u(s))}\Big]\\
          &\lesssim_{T,p} && \big[H^{p+2} + H^{2j+2} + \exp^{-C\ell}\big]C_\mathrm{data}.
      \end{alignedat}
  \end{equation*}
  The optimal error convergence follows now with the choices for~$j=\lceil \frac{p}{2} \rceil$, and~$\ell\sim C_p|\log H|$ in the theorem 
  \begin{equation*}
    \sup_{t\in[0,T]} \big[\Norm{e(t)} + \Norm{\nabla e(t)}\big]\lesssim_{T,p} H^{p+2}\, C_{\mathrm{data}}. 
  \end{equation*}
\end{proof}

\begin{remark}\label{rem:classical_holod}
    In the case, where~$k=1$, then by~\Cref{rem:parameters} it is sufficient to choose~$p=j=0$. This case is not directly covered in the proof, however it is easily adapted in the following way. The first part of the proof essentially works similarly until we arrive at~\eqref{eq:philoc_estimate} 
    \begin{equation*}
    \begin{aligned}
      \Norm{\nabla \varphi^\mathrm{loc}} &\leq \Norm{\nabla \varphi} + \Norm{\nabla ((\tProjection + \hat{\mathcal{P}}^{0})u-(\tProjectionloc + \hat{\mathcal{P}}^{0,\mathrm{loc}})u)}\\ 
      & = \Norm{\nabla \varphi} + \Norm{\nabla ((\Corrector^{[\ell]} - \Corrector)u)}.
    \end{aligned}
    \end{equation*}
    We have~$\varphi = u - \tProjection u$ and the optimal convergence~$r=p+2=2$ then follows from~\eqref{eq:tprojection_error} and~\Cref{lem:C_decay_error} with the choice~$\ell\sim C_p|\log H|$. 
\end{remark}

{\begin{remark}
    In the proofs in \Cref{sec:proofs}, we do not explicitly track the dependence on the lower and upper bound $\alpha$ and $\beta$ on the coefficient. We emphasize, however, that the constants in the decay results have a linear scaling with respect to the contrast $\beta/\alpha$ (which is a direct consequence of the results in~\cite{DonHM23}), while the pre-factors of the final localization error of the eho-LOD method involve a polynomial scaling. More precisely, by the iterative arguments in the proofs, we obtain the scaling $(\tfrac{1}{\alpha})^j(\tfrac\beta\alpha)^{2j+2}$. Note that this is not surprising as such scalings appear also in the estimates of classical LOD-type methods (i.e., $j=0$), amplified here by the additional corrections. As for classical results, the scaling does not appear to be problematic numerically.
\end{remark}}


\section{Numerical examples}\label{sec:numerical_examples}

\subsection{Time discretization}\label{subsec:time_discretization}
So far, we have mainly looked into the spatial discretization and the semi-discrete parabolic model problem. In the numerical experiments in this section, we apply the fourth-order backward difference formula (BDF4) for the temporal discretization. The main motivation is that since the method is high-order in space, we use a suitable high-order method in time to obtain a sufficiently accurate solution. In principle, a lower order method can be used, for example the Crank-Nicolson method, but the time step size~$\tau$ should be chosen sufficiently small so that the spatial error is dominant. Let $\tilde{\partial}_t^n, \;n = 1,\cdots,4$ be the $n$-th order backward difference formula approximating the time derivative $\partial_t$. The eho-LOD-BDF4 seeks a series of solutions~$\{{\cu^n}\}_{n=0}^N\subset \cVloc$ such that
\begin{subequations}\label{eq:eholod_bdf4}
\begin{align}    
    \tau^{-1} \IP{\tilde{\partial}_t^n\cu^{n}}{\cv} + \HIP{\cu^{n} }{\cv} &= \IP{f(t_{n})}{\cv}, \quad n = 1,2,3,\\
    \tau^{-1} \IP{\tilde{\partial}^4\cu^{n}}{\cv} + \HIP{\cu^{n} }{\cv} &= \IP{f(t_{n})}{\cv}, \quad n \ge 4,
\end{align}
\end{subequations}
for all~$\cv\in\cVloc$, where the initial condition is given by $\cu^0 = \tProjectionloc u_0 + \hProjectionloc u_0$. We use the BDF4 method to discretize in time, and we use the BDF$n$ method ($n$=1,2,3) to approximate the starting values. Note that this might lead to an order reduction of the temporal error if the first time steps are not chosen sufficiently small. Since the focus of this work is a novel spatial discretization, we apply the same temporal discretization for the~eho-LOD solution~$\cu^n$ and the reference solution~$u_h$. Therefore, only the spatial error is observed in our experiments. For a more thorough time discretization, one may choose any of the methods discussed in~\cite[\S~III.1,~p.~356]{hairer1993solving}. In our simulations, we use~$\tau=2^{-9}$ in one dimension and $\tau = 2^{-8}$ in two dimensions.

\subsection{Numerical experiments}

\begin{figure}[!t]
    \centering
    \includegraphics[scale=.5]{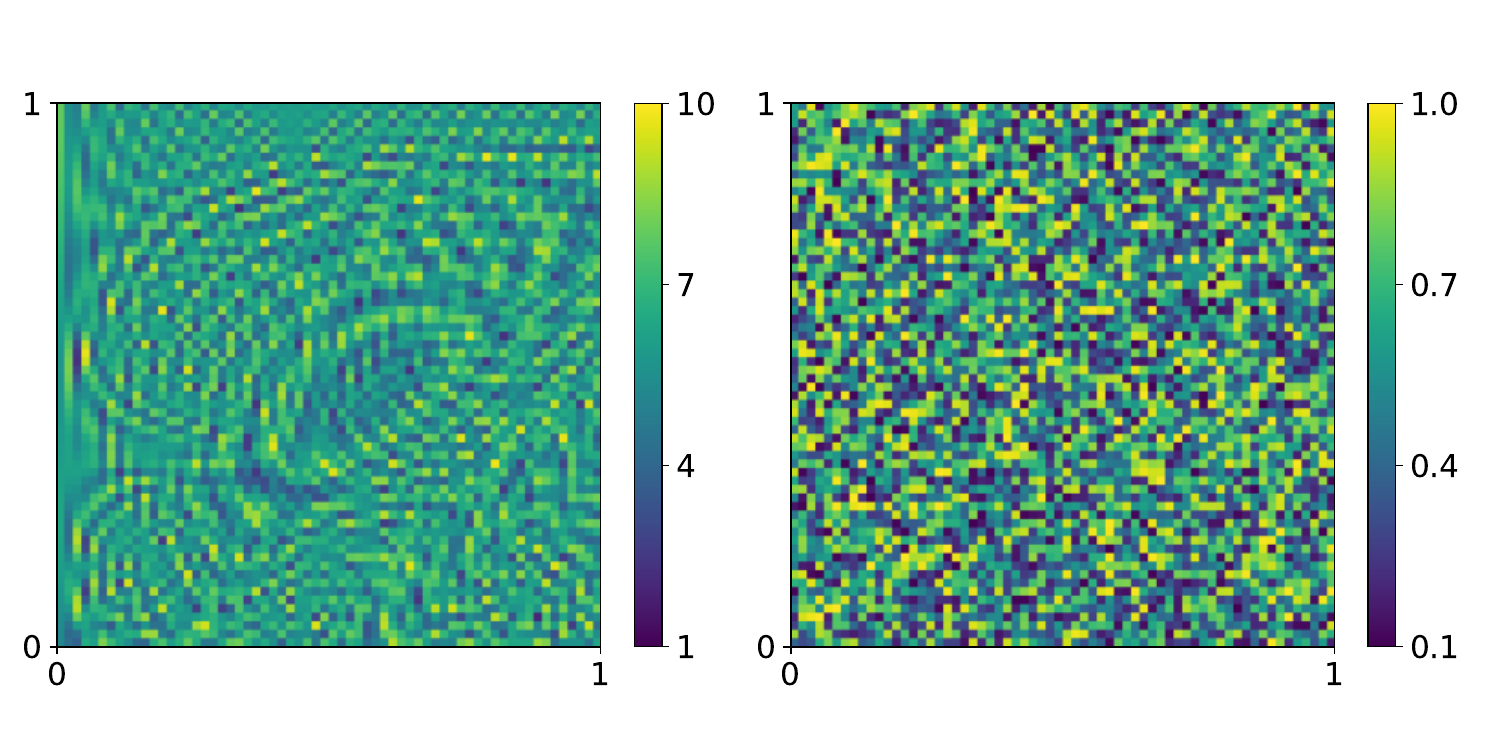}
    \vspace*{-0.5cm}
    \caption{\small Coefficient with some coarse structure and fine oscillations used for~\Cref{ex:example1} (left) and random diffusion coefficient for~\Cref{ex:example2} (right).}
    \label{fig:coefficient}
\end{figure}

{In this section, we present some numerical examples to verify the theoretical results. For the construction of a basis of $\cVloc$, we discretize the local problems using the  finite element space~$V_h$ with mesh size~$h\leq\varepsilon$. We present examples in 1D and 2D, where we consider the computational domain $\Omega = (0,1)^d$ with $d=1,2$. We compute a reference solution on the fine mesh $\mathcal{T}_h$ and measure the error at the final time $T=1$ in the energy norm~$|\cdot| = \sqrt{a(\cdot, \cdot)}$. %
To verify our theoretical results, we use quad-precision arithmetic and direct solvers in our calculations for examples in one dimension. However, high-precision arithmetic does not scale well in higher dimensions, and so for~$d=2$, we use an iterative solver based on the Schur complement with an appropriate preconditioner for solving the linear system (see~\Cref{rem:practical_split}). The code is written using the Julia programming language (\cite{Julia-2017}) and can be found online in the Github repository \href{https://github.com/Balaje/MultiScaleFEM.jl}{\texttt{https://github.com/Balaje/MultiScaleFEM.jl}}. The finite element computations use the \texttt{Gridap.jl} library (\cite{Badia2020, Verdugo2022}) and the iterative solvers with preconditioners from the \texttt{IterativeSolvers.jl} package.}%

\begin{example}\label{ex:example1}
    We consider the diffusion coefficient shown in~\Cref{fig:coefficient} (left), which oscillates on the scale $2^{-6}$, and study the convergence rates of the eho-LOD-BDF4 method with $\ell = \infty$. The fine-scale space $V_h$ is constructed on a uniform mesh with $h = 2^{-8}$, sufficient to resolve the oscillations in the diffusion coefficient. We consider the source term~$f(x,t) = 20\pi^2 \sin{\left(\pi x_1\right)}\sin{\left(\pi x_2\right)}\sin^5(t)$ and zero initial data so that Assumption \ref{ass:regularity} is satisfied for optimal convergence with~$p\leq 4$.  We discretize in time using the BDF4 method \eqref{eq:eholod_bdf4} with a constant time step size $\tau = 2^{-8}$. We also compute the reference solution using the same time discretization scheme since we are interested in tracking the spatial error only.

    In~\Cref{fig:example1}, the errors of the enriched higher-order LOD method are plotted against the mesh size~$H$ and the degrees of freedom. For~$p=0$, the optimal second order rate can be observed for~$j=0$. Enriched spaces do not pay off in terms of convergence rates as the rate is already optimal. Only the size of the error is positively affected. 
    For~$p=1$, we can observe a similar behavior, where for~$j=1,2$ we have only an improvement in the error and not the rate. We have thus omitted the plots for better visibility. Further, the plot for~$j=0$ has third order convergence, which comes from the fact that the second order plateau (which shows for~$p=2,3,4$) is not reached yet. For~$p=2,3,4$ the error plots reach the second order plateau for smaller mesh sizes, and choosing~$j=1$ for~$p=2$ and~$j=2$ for~$p=3,4$ yields the optimal error rate~$r=p+2$. In addition, we can also observe some super-convergence for~$j=1$ and $p=3,4$ for larger~$H$, which then saturates, and it can be observed that~$j=2$ is necessary. 
    {Similar to~$j=1$ for~$p=4$, the same is observed for $p=0$. We see that the errors are smaller for~$j=1,2$ compared to~$j=0$, and as $H$ decreases, the rates saturate and the errors approach the errors of the case when $j=0$.}  Thus, we can conclude that the choice $j=\lceil \tfrac{p}{2}\rceil$ in Theorem \ref{thm:semi_discrete} yields optimal convergence rates, a larger $j$ may improve the error but not convergence rate, and a smaller $j$ may lead to suboptimal convergence rate or larger errors. In Figure~\ref{fig:example1} (right), we can clearly observe that in terms of number the degrees of freedom increasing $j$ and $p$ appropriately does really improve convergence. 
\end{example}

\begin{figure}[!t]
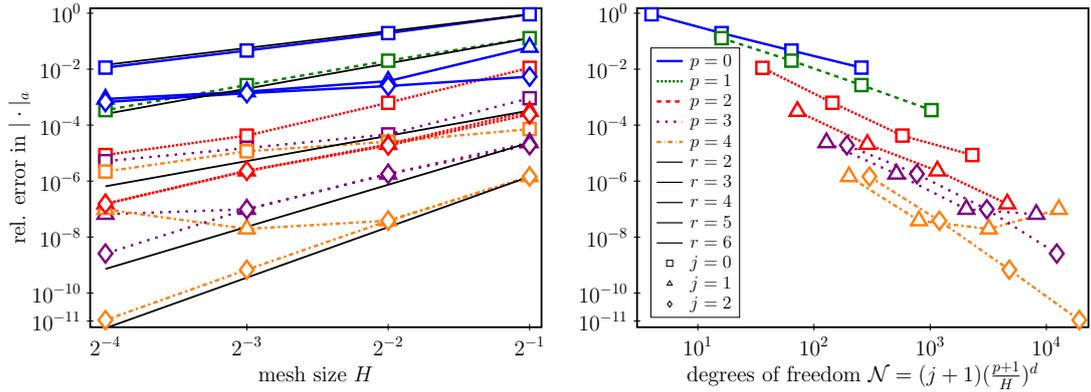

  \centering
  \begin{subfigure}[b]{.5\textwidth}
    \includestandalone[scale=.47]{errors_mesh_size}
  \end{subfigure}%
  \begin{subfigure}[b]{.5\textwidth}
    \includestandalone[scale=.47]{errors_DoF}
  \end{subfigure}
  \caption{\small Relative energy errors for~\Cref{ex:example1} with respect to the mesh size (left) and degrees of freedom (right). The legend applies to both plots. }\label{fig:example1}
\end{figure}

\begin{example}\label{ex:example2}
    We consider the diffusion coefficient shown in the right-hand side of~\Cref{fig:coefficient}, which oscillates randomly on the scale $2^{-6}$, and study localization effects for the eho-LOD-BDF4 method. We generate the random diffusion coefficient shown in~\Cref{fig:coefficient}  (right) with values between $0.1$ and $1$. We use the same right-hand side $f$ as in~\Cref{ex:example1} as well as the discretization parameters $h = 2^{-8}$ and $\tau = 2^{-8}$.

    The error plot on the left-hand side of~\Cref{fig:localization} shows the effect of the localization for the enriched corrections. Here, we can observe that a localization parameter~$\ell=9$ is sufficient to show the optimal convergence rate~$r=5$ for~$p=3$. For reference, the top line represents the same polynomial degree~$p=3$ with~$j=0$ for~$\ell=4$, and we can observe that with the same localization the enriched version with~$j=2$ outperforms the classical higher-order LOD which confirms the localization result from~\Cref{thm:localization}. 
\end{example}

\begin{example}\label{ex:example3}
    We consider a one-dimensional example with a random diffusion coefficient  oscillating between $0.1$ and $1$ at the scale $\varepsilon = 2^{-8}$. We choose
    \begin{equation*}
    f(x,t) = \begin{cases}
        0, & x < 0.5\\
        \sin{\left( \pi x \right)}\sin^5{(t)}, & x \ge 0.5.
    \end{cases}
    \end{equation*}
    We construct the fine-scale space~$V_h$ on a uniform mesh with $h = 2^{-10}$ and solve the problem using the BDF4 method in time with $\tau = 2^{-9}$. Finally, we use the fine-scale discretization $V_h$ to compute the reference solution.

    In this one-dimensional example, we showcase the polynomial degrees~$p=2,3,4$ with their respective optimal~$j$ as well as the errors for $j=0$ in Figure~\ref{fig:localization} (right). 
    For each point the optimal localization parameter is portrayed. If we consider the optimal localization for~$p=4$ with~$j=0$ at~$H=2^{-4}$, then we have that~$6=\ell\gtrsim 2|\log H|$, since we can only expect second order of convergence. In contrast we have for~$j=2$ an optimal localization of~$14 = \ell\gtrsim 6|\log H|$ for convergence of order~$r=6$. This is perfectly in line with our theory derived in~\cref{sec:num_hom}, where we naturally need larger localization for much lower errors but the constant when choosing the localization parameter scales similarly for all~$j$. 
\end{example}

\begin{figure}[!t]
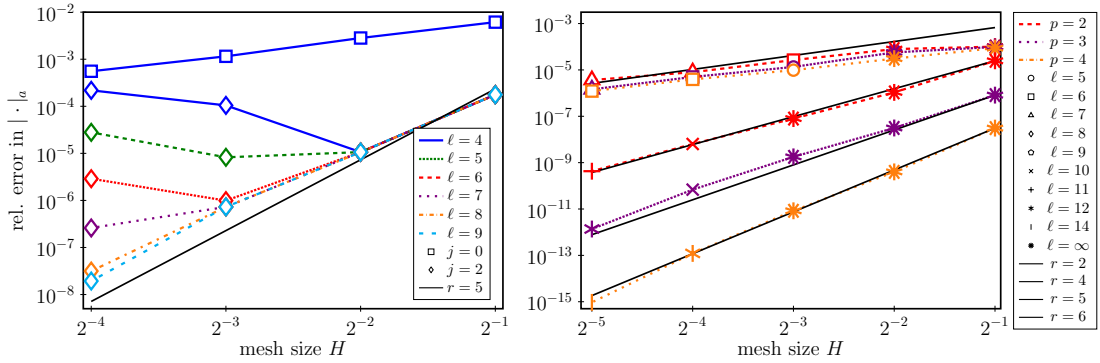

  \centering
  \begin{subfigure}[b]{.46\textwidth}
    \includestandalone[scale=.45]{errors_2d_ell}
  \end{subfigure}%
  \begin{subfigure}[b]{.54\textwidth}
    \includestandalone[scale=.45]{errors_1d_ell}
  \end{subfigure}
  \caption{\small Relative energy errors for~\Cref{ex:example2} with random diffusion coefficient and~$p=3$ with different localization parameters (left). Localization errors for~\Cref{ex:example3} of a one-dimensional random diffusion coefficient (right). On the right, the first three lines at the top are the classical higher-order LOD with $j=0$, and the lines below that are~$j=1$ for~$p=2$ and then~$j=2$ for~$p=3,4$}\label{fig:localization}
\end{figure}

\section{Conclusion}\label{sec:conclusion}
In this paper, we have developed the enriched higher-order localized orthogonal decomposition method which yields optimal convergence rates in space for the heat equation with highly oscillatory coefficients. We have presented the construction of an enriched correction operator to  the higher-order multiscale space from the elliptic setting and proved optimal convergence rates without additional assumptions on the coefficient. In addition, we have also proved the exponential decay properties of the enriched corrections and showed that the developed method preserves the localization of the higher-order LOD method. Finally, we have presented numerical experiments to verify the theoretical results, and observed very good agreement. 

{The present work opens several directions for further investigation. As already mentioned,  the enrichment strategy appears well-suited for generalization to other time-dependent linear PDEs with an appropriate adjustment of the error analysis. In addition, it would be of interest to explore the performance of the proposed method in the context of time-dependent coefficients, for example by incorporating the adaptive update strategy for the multiscale basis functions from \cite{MaierVerfurth2022}. %
While these aspects lie beyond the scope of the current work, our results suggest that the proposed framework is a solid foundation for addressing these scenarios. } 

\section*{Acknowledgments}
B. Kalyanaraman was funded by the Kempe foundation in Sweden with Project-ID JCK22-0012. R.~Maier acknowledges funding by the Deutsche Forschungsgemeinschaft (DFG, German Research
Foundation) -- Project-ID 545165789. Part of the computation was carried out in Project hpc2n2024-109, hpc2n2025-217 and hpc2nstor2025-060  provided by the National Academic Infrastructure for Supercomputing in Sweden (NAISS), partially funded by the Swedish Research Council through grant agreement no.~2022-06725. The authors also acknowledge support by the state of Baden-Württemberg through bwHPC.


\newcommand{\etalchar}[1]{$^{#1}$}

\end{document}